\newtheorem{theorem}{Theorem}
\newtheorem{corollary}[theorem]{Corollary}
\newtheorem{proposition}[theorem]{Proposition}
\newtheorem{definition}[theorem]{Definition}
\begin{document}
\thispagestyle{plain}

\begin{center}
	\textsc{\Large On the Entropy of a Two Step Random \\[1ex] Fibonacci Substitution} \vspace{1.5ex}
		
	\textsc{Johan Nilsson}\vspace{1.5ex}
	
	{\small Bielefeld University, Germany}
	
	{\small\texttt{jnilsson@math.uni-bielefeld.de}}
	
\end{center}

\begin{abstract}
We consider a random generalisation of the classical Fibonacci substitution. The substitution we consider is defined as the rule mapping $\mathtt{a}\mapsto \mathtt{baa}$ and $\mathtt{b} \mapsto \mathtt{ab}$ with probability $p$ and $\mathtt{b} \mapsto \mathtt{ba}$  with probability $1-p$ for $0<p<1$ and where the random rule is applied each time it acts on a $\mathtt{b}$. We show that the topological entropy of this object is given by the growth rate of the set of inflated random Fibonacci words, and we exactly calculate its value. 
\end{abstract}

{\small
\noindent MSC2010 classification; 68R15 Combinatorics on words, 05A16 Asymptotic enumeration, 37B10 Symbolic dynamics.
}

\section{Introduction}

In \cite{godreche} Godr\`eche and Luck define the random Fibonacci chain by the generalised substitution 
\[ 
	\theta : 
	\left\{
	\begin{array}{ll}
	  \mathtt{a} \mapsto \mathtt{b}   \\ 
	  \mathtt{b} \mapsto \begin{cases}
	      \mathtt{ab} & \textnormal{with probability $p$} \\
	      \mathtt{ba} & \textnormal{with probability $1-p$}
            \end{cases}
	\end{array}
	\right.
\]
for $0<p<1$ and where the random rule is applied each time $\theta$ acts on a $\mathtt{b}$. They introduce the random Fibonacci chain when studying quasi-crystalline structures and tilings in the plane. In their paper, it is claimed (without proof) that the topological entropy of the random Fibonacci chain is given by the growth rate of the set of inflated random Fibonacci words. This was later, with a combinatorial argument, proved in a more general context in \cite{nilsson}.

The renewed interest in this system, and in possible generalisations, stems from the observation that the natural geometric generalisation of the symbolic sequences by tilings of the line had to be Meyer sets with entropy and interesting spectra \cite{baake}. There is now a fair understanding of systems that emerge from the local mixture of inflation rules that each define the same hull. However, little is known so far about more general mixtures. Here we place our attention to one such generalisation. It is still derived from the Fibonacci rule, but mixes inflations that define distinct hulls.  

In this paper we consider the randomised substitution $\phi$ defined by
\[
\phi = \left\{
\begin{array}{ll}
  \mathtt{a} \mapsto \mathtt{baa}   \\ 
  \mathtt{b} \mapsto \begin{cases}
	      \mathtt{ab} & \textnormal{with probability $p$} \\
	      \mathtt{ba} & \textnormal{with probability $1-p$}
            \end{cases}
\end{array}
\right.
\]
for $0<p<1$ and where the random rule is applied each time $\phi$ acts on a $\mathtt{b}$. 
The substitution $\phi$ is a mixture of two substitutions, whose hulls are different. This is true, since the hull of the substitution $(\mathtt{a},\mathtt{b})\mapsto(\mathtt{baa},\mathtt{ab})$ contains words with the sub-words $\mathtt{aaa}$ and $\mathtt{bb}$, but neither of these sub-words are to be found in any word of the hull of $(\mathtt{a},\mathtt{b})\mapsto(\mathtt{baa},\mathtt{ba})$. For a more detailed survey of the differences and similarities of the generated hulls of these two substitutions see \cite{luck}.

Before we can state our main theorem in detail we need to introduce some notation. A word $w$ over an alphabet $\Sigma$ is a finite sequence $w_1w_2\ldots w_n$ of symbols from $\Sigma$. We let here $\Sigma = \{\mathtt{a},\mathtt{b}\}$. We denote a sub-word of $w$ by $w[a,b] = w_a w_{a+1}w_{a+2}\ldots w_{b-1}w_b$ and similarly we let $W[a,b] =\{w[a,b]: w\in W\}$. By $|\cdot|$ we mean the length of a word and the cardinality of a set. Note that $|w[a,b]| = b-a+1$. When indexing the brackets with a letter $\alpha$ from the alphabet, $|\cdot|_{\alpha}$, we shall mean the numbers of occurrences of $\alpha$ in the enclosed word. 

For two words $u = u_1u_2u_3\ldots u_n$ and $v = v_1v_2v_3\ldots v_m$ we denote by $uv$ the concatenation of the two words, that is, $uv = u_1u_2u_3\ldots u_n v_1 v_2 \ldots v_m$. Similarly we let for two sets of words $U$ and $V$ their product be the set $UV = \{uv: u\in U, v\in V\}$ containing all possible concatenations.

Letting $\phi$ act on the word $\mathtt{a}$ repeatedly yields an infinite sequence of words $r_n = \phi^{n-1}(\mathtt{a})$. We know that $r_1=\mathtt{a}$ and $r_2=\mathtt{baa}$. But $r_3$ is one of the words $\mathtt{abbaabaa}$ or $\mathtt{babaabaa}$ with probability $p$ or $1-p$. The sequence $\{r_n\}_{n=1}^{\infty}$ converges in distribution to an infinite random word $r$. We say that $r_n$ is an inflated word (under $\phi$) in generation $n$ and we introduce here sets that correspond to all inflated words in generation $n$; 

\begin{definition}
\label{def: recursive def An and Bn}
Let $A_1 = \{ \mathtt{a} \}$, $B_1 = \{ \mathtt{b} \}$ and for $n\geq2$ we define recursively 
\begin{align*}
    A_{n} &= B_{n-1}A_{n-1}A_{n-1}, \\
    B_{n} &= A_{n-1}B_{n-1} \cup B_{n-1}A_{n-1},   
\end{align*}
and we let $A := \lim_{n\to\infty}A_n$ and $B := \lim_{n\to\infty}B_n$.
\end{definition}

The sets $A$ and $B$ are indeed well defined. This is a direct consequence of Corollary \ref{cor: An prefix equal Bn prefix}. It is clear from the definition of $A_n$ and $B_n$ that all their elements have the same length, that is, for all $x,y \in A_n$ (or $x,y\in B_n$) we have $|x| = |y|$. By induction it easily follows that for $a\in A_n$ we have $|a|=f_{2n}$ and for $b\in B_n$ we have $|b|=f_{2n-1}$, where $f_m$ is the $m$th Fibonacci number, defined by $f_{n+1}=f_{n}+f_{n-1}$ with $f_0=0$ and $f_1=1$. 

For a word $w$ we say that $x$ is a sub-word of $w$ if there are two words $u,v$ such that $w= uxv$. The sub-word set $F(S,n)$ is the set of all sub-words of length $n$ of words in $S$. The \emph{combinatorial entropy} of the random Fibonacci chain is defined as the limit $\lim_{n\to\infty} \frac{1}{n}\log |F(A,n)|$. The combinatorial entropy is known to equal the topological entropy for our type of systems, see \cite{lind}. The existence of this limit is direct by Fekete's lemma \cite{fekete} since we have sub-additivity,  $\log|F(S,n+m)|\leq\log|F(S,n)|+\log|F(S,m)|$.
We can now state the main result in this paper.

\begin{theorem}
\label{thm: main}
The logarithm of the growth rate of the size of the set of inflated random Fibonacci words equals the topological entropy of the random Fibonacci chain, that is 
\begin{equation}
\label{eq: main}
\lim_{n\to\infty} \frac{\log |A_n|}{f_{2n}} 
=
\lim_{n\to\infty} \frac{\log |B_n|}{f_{2n-1}}
=
\lim_{n\to\infty} \frac{\log |F(C,n)|}{n} 
=
\frac{1}{\tau^3} \log 2,
\end{equation}
where $\tau$ is the golden mean, $\tau = \frac{1+\sqrt{5}}{2}$ and $C \in \{A,B\}$.
\end{theorem}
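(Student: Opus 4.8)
The plan is to prove the three equalities in \eqref{eq: main} by first determining $|A_n|$ and $|B_n|$ exactly, and then squeezing the factor complexity $|F(C,n)|$ between quantities governed by these counts. We first note that $F(A,n)=F(B,n)$: every finite factor of an infinite word of $A$ occurs in a word of some $A_m$, which is a factor of a word of $B_{m+1}=A_mB_m\cup B_mA_m$, and symmetrically, so it suffices to treat $C=A$.

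For the exact counts the key point is that $\{\mathtt{baa},\mathtt{ab},\mathtt{ba}\}$, the set of possible $\phi$-images of letters, is a uniquely decodable code: the only overlap among these words is that $\mathtt{ba}$ is a proper prefix of $\mathtt{baa}$, and running the Sardinas--Patterson test from the resulting dangling suffix $\mathtt a$ produces only the sets $\{\mathtt a\}$, $\{\mathtt b\}$, $\{\mathtt{aa},\mathtt a\}$, which then cycle, never a codeword; hence no finite word factors into these blocks in two ways. Since every inflated word in $A_n$ or $B_n$ (for $n\ge2$) is by construction a concatenation of such blocks, this factorisation is unique, so reading off each block ($\mathtt{ab}$ versus $\mathtt{ba}$) recovers the choices made at the last application of $\phi$, and, de-substituting and iterating, the entire history of random choices is recovered from the word. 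Therefore the map (history)$\mapsto$(word) is injective on $A_n$ and on $B_n$, so $|A_n|=2^{h_A(n)}$ and $|B_n|=2^{h_B(n)}$, where $h_A,h_B$ count the choices. (Equivalently, $|A_n|=|B_{n-1}|\cdot|A_{n-1}|^2$ comes for free because all words in a fixed $A_k$ or $B_k$ share a common length so concatenation is injective, and the recognisability is exactly what additionally shows the union defining $B_n$ is disjoint, i.e.\ $|B_n|=2\,|A_{n-1}||B_{n-1}|$.) Counting the choices in forming $r_k$ one step at a time gives $h_A(n)=\sum_{k=1}^{n-1}|r_k|_{\mathtt b}$ with $r_k\in A_k$ and $h_B(n)=\sum_{k=1}^{n-1}|r_k|_{\mathtt b}$ with $r_k\in B_k$; since the abelianisation of $\phi$ is $\bigl(\begin{smallmatrix}2&1\\1&1\end{smallmatrix}\bigr)=\bigl(\begin{smallmatrix}1&1\\1&0\end{smallmatrix}\bigr)^2$, one gets $|r_k|_{\mathtt b}=f_{2k-2}$ for $r_k\in A_k$ and $|r_k|_{\mathtt b}=f_{2k-3}$ for $r_k\in B_k$, and the identities $\sum_{j=0}^{m}f_{2j}=f_{2m+1}-1$, $\sum_{j=1}^{m}f_{2j-1}=f_{2m}$ yield $h_A(n)=f_{2n-3}-1$ and $h_B(n)=f_{2n-4}+1$. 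The first two limits in \eqref{eq: main} then follow from $f_{m+\ell}/f_m\to\tau^{\ell}$, since $\log|A_n|/f_{2n}=(f_{2n-3}-1)\log2/f_{2n}\to\tau^{-3}\log2$ and likewise $\log|B_n|/f_{2n-1}\to\tau^{-3}\log2$.

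For the third limit, write $h:=\lim_n \tfrac1n\log|F(A,n)|$, which exists by Fekete. The lower bound $h\ge\tau^{-3}\log2$ is immediate: each of the $|A_n|$ words of $A_n$ is a length-$f_{2n}$ factor of an infinite word of $A$ (it sits inside a word of $A_{n+1}=B_nA_nA_n$), so $|F(A,f_{2n})|\ge|A_n|$ and hence $h=\lim_n\log|F(A,f_{2n})|/f_{2n}\ge\tau^{-3}\log2$. For the matching upper bound, first establish by induction on $t$ that the number of length-$t$ prefixes, and symmetrically of length-$t$ suffixes, of words in $A_k\cup B_k$ is at most $2^{t/\tau^3+O(\log t)}$, uniformly in $k$: viewing such a prefix inside the block structure of Definition \ref{def: recursive def An and Bn} at the level where the block lengths are comparable to $t$, one peels off from the front a complete sub-block of length $L=\Theta(t)$, paying a factor equal to that block's cardinality, which is $2^{L/\tau^3+O(1)}$ because $\log|A_m|=f_{2m-3}-1=f_{2m}/\tau^3+O(1)$ and $\log|B_m|=f_{2m-4}+1=f_{2m-1}/\tau^3+O(1)$, and then recurses on the shorter leftover prefix; the peeled blocks are disjoint pieces of the original prefix, so their lengths sum to at most $t$, and there are $O(\log t)$ of them, giving the claim. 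Now fix $n$ and choose $k$ with $f_{2k-1}<n\le f_{2k+1}$; the infinite word, being $\phi^{k-1}$ of an infinite $A$-word, is a concatenation of level-$(k-1)$ blocks (words in $A_k\cup B_k$ of length $f_{2k}$ or $f_{2k-1}$), and a length-$n$ factor meets at most $\lceil n/f_{2k-1}\rceil+1\le 4$ consecutive such blocks; writing it as (suffix of the first)(full middle blocks)(prefix of the last) and combining the prefix/suffix estimate with each full block's cardinality $2^{(\mathrm{length})/\tau^3+O(1)}$ gives, for fixed block types and offset, at most $2^{n/\tau^3+O(\log n)}$ possibilities, and there are only $\operatorname{poly}(n)$ such types and offsets, so $|F(A,n)|\le 2^{n/\tau^3+O(\log n)}$ and $h\le\tau^{-3}\log2$.

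The genuinely substantial step is the upper bound on the factor complexity, and within it the uniform prefix/suffix estimate: a priori a window of length $n$ straddling two long level-$k$ blocks might look as if it carried the information content of both, which would give the too-large rate $\tau^{-2}\log2$, and the peeling argument is exactly what shows that such a window only ever exposes $\Theta(n/\tau^3)$ of the random choices. Keeping the error term genuinely $o(n)$ and uniform in the level is where care is needed; the unique-decodability check and the Fibonacci bookkeeping are routine.
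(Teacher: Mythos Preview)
Your argument is correct, and the first two equalities together with the lower bound on $|F(A,n)|$ match the paper's route almost exactly (the paper's Proposition~\ref{prop: phi injectiv} is your unique-decodability of $\{\mathtt{baa},\mathtt{ab},\mathtt{ba}\}$, and Proposition~\ref{prop: size of An Bn} your counting of histories). The upper bound on the factor complexity, however, is obtained quite differently. The paper argues by \emph{de-substitution}: it shows that $F(A,f_{2n})=F\bigl(\phi(F(A,f_{2n-2}+1)),f_{2n}\bigr)$ and then runs the recursion $|F(A,f_{2n})|\le 5\cdot 2^{f_{2n-2}+2}\,|F(A,f_{2n-2})|$, which requires first proving a stabilisation lemma $F(A_{n+1},m)=F(A,m)$ for $m\le f_{2n}-f_{2n-3}$ and a Diophantine estimate (Propositions~\ref{prop: phi k}--\ref{prop: bound nbr of b}) pinning the $\mathtt b$-count of any factor of length $f_{2n}$ to $f_{2n-2}\pm1$. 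Your route is instead a \emph{horizontal} block decomposition at a single level $k$ with $f_{2k-1}<n\le f_{2k+1}$: a factor is (suffix of a block)(at most two full blocks)(prefix of a block), and you bound the prefix and suffix counts by a peeling recursion giving $2^{t/\tau^3+O(\log t)}$.

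What each approach buys: the paper's de-substitution gives an explicit closed-form bound $|F(A,f_{2n})|\le 2^{f_{2n-3}+2n}5^{n-1}$ but needs the nontrivial auxiliary machinery of Sections~3--4. Your approach sidesteps all of that and is conceptually cleaner, but the prefix/suffix estimate is only sketched; to make it airtight you should state the recursion $P(t)\le c\cdot|X_{m}|\cdot P(t-|x_m|)$ precisely (distinguishing whether the first level-$m$ sub-block is an $A$- or $B$-block, with the extra factor of~$2$ when dropping through a $B_m=A_{m-1}B_{m-1}\cup B_{m-1}A_{m-1}$), and check that the number of recursive steps is $O(\log t)$ because each step either peels a block or lowers the level by one, and the starting level is $O(\log t)$. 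With that spelled out, your proof is complete and arguably more elementary than the paper's.
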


The outline of the paper is that we start by studying the sets $A_n$ and $B_n$. Next we give a finite method for finding the sub-word set $F(A,n)$, (which we will see is the same as $F(B,n)$). Thereafter we derive some diophantine properties of the Fibonacci number that will play a central part when we look at the distribution of the letters in words from $F(A,n)$. Finally we present an estimate of $|F(A,n)|$, leading up to the proof of Theorem \ref{thm: main}.

\section{Inflated words}

In this section we present the sets of inflated words and give an insight to their structure. The results presented here will also play an important role for the results in the coming sections.

\begin{proposition}
\label{prop: phi injectiv}
Let $u,v \in A_n$ (or both in $B_n$). Then $u\neq v$ if and only if $\{\phi(u)\}\cap \{\phi(v)\} = \emptyset$, where here $\{\phi(z)\}$ denotes the set of all possible words that can be obtained by applying $\phi$ on $z$.
\end{proposition}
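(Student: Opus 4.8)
One direction is trivial. Applying $\phi$ to any word always yields at least one word, since each occurrence of $\mathtt b$ is resolved to $\mathtt{ab}$ or $\mathtt{ba}$; hence $\{\phi(z)\}\neq\emptyset$ for every $z$, so $u=v$ implies $\{\phi(u)\}=\{\phi(v)\}\neq\emptyset$, and contrapositively $\{\phi(u)\}\cap\{\phi(v)\}=\emptyset$ forces $u\neq v$. The substance is the converse: if $u\neq v$ with $u,v$ both in $A_n$ (or both in $B_n$) then the image sets are disjoint; equivalently, any $w\in\{\phi(u)\}\cap\{\phi(v)\}$ forces $u=v$. I would prove this recognisability statement by induction on $n$, handling $A_n$ and $B_n$ simultaneously, the case $n=1$ being vacuous as $|A_1|=|B_1|=1$. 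A trivial sub-induction first gives $\{\phi(\gamma)\}\subseteq B_n$ for $\gamma\in B_{n-1}$ and $\{\phi(\delta)\}\subseteq A_n$ for $\delta\in A_{n-1}$, so every word in $\{\phi(\gamma)\}$ has length $f_{2n-1}$ and every word in $\{\phi(\delta)\}$ has length $f_{2n}$.

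The engine of the induction is that, because all words of $B_{n-1}$ share the length $f_{2n-3}$ and all words of $A_{n-1}$ the length $f_{2n-2}$, a word of $A_n$ has a unique decomposition $u=\beta\alpha_1\alpha_2$ with $\beta\in B_{n-1}$, $\alpha_1,\alpha_2\in A_{n-1}$, and then every $w\in\{\phi(u)\}$ equals $\phi(\beta)\phi(\alpha_1)\phi(\alpha_2)$ for independent choices in the three factors. By the length remark the two cut positions $f_{2n-1}$ and $f_{2n+1}$ of $w$ depend on $w$ alone, not on the preimage; so if $w\in\{\phi(u)\}\cap\{\phi(v)\}$ with $v=\beta'\alpha_1'\alpha_2'$, then cutting $w$ at those positions produces non-empty intersections $\{\phi(\beta)\}\cap\{\phi(\beta')\}$ and $\{\phi(\alpha_i)\}\cap\{\phi(\alpha_i')\}$, and the induction hypothesis at level $n-1$ yields $\beta=\beta'$ and $\alpha_i=\alpha_i'$, hence $u=v$. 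For $u,v\in B_n=A_{n-1}B_{n-1}\cup B_{n-1}A_{n-1}$ the identical cutting argument works whenever $u$ and $v$ lie in the same one of the two sets on the right-hand side (cut at position $f_{2n}$ or $f_{2n-1}$ accordingly).

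The remaining case, $u\in A_{n-1}B_{n-1}$ and $v\in B_{n-1}A_{n-1}$, is where I expect the real work to be. Here $\{\phi(u)\}\subseteq A_nB_n$ while $\{\phi(v)\}\subseteq B_nA_n$, so it suffices to prove $A_mB_m\cap B_mA_m=\emptyset$ for every $m$; this disjointness also shows that every word of $B_n$ lies in exactly one of $A_{n-1}B_{n-1}$, $B_{n-1}A_{n-1}$, so the decompositions above are unambiguous. I would obtain the disjointness from an auxiliary lemma proved by its own induction on $m$: \emph{no word of $A_m$ ends with a word of $B_m$.} Given the lemma, if $w=ab=b'a'$ with $a,a'\in A_m$, $b,b'\in B_m$, then since $|b|=f_{2m-1}<f_{2m}=|a'|$, comparing the last $f_{2m-1}$ letters of $w$ via the two factorisations shows that $b$ is a suffix of $a'$, contradicting the lemma. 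For the lemma itself, write $a\in A_m$ as $a=cd_1d_2$ with $c\in B_{m-1}$, $d_1,d_2\in A_{m-1}$; if $a$ ended with some $b\in B_m$, then, as $|b|=f_{2m-1}>f_{2m-2}=|d_2|$, $b$ would equal a length-$f_{2m-3}$ suffix of $d_1$ followed by $d_2$, and expressing $b$ through $B_m=A_{m-1}B_{m-1}\cup B_{m-1}A_{m-1}$ would exhibit $d_2$ (first alternative) or $d_1$ (second alternative) as a word of $A_{m-1}$ terminating in a word of $B_{m-1}$, against the induction hypothesis. Small cases ($m\leq 2$) are immediate, and since neither the lemma nor the disjointness invokes the Proposition, there is no circularity.
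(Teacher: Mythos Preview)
Your argument is correct, but the paper's proof is quite different and considerably shorter. The paper does not use the recursive block structure of $A_n$ and $B_n$ at all: it fixes the first index $k$ where $u_k\neq v_k$, observes that the common prefix $u[1,k-1]=v[1,k-1]$ contributes the \emph{same length} to both images (each $\mathtt a$ maps to a length-$3$ block, each $\mathtt b$ to a length-$2$ block, independently of the random choice), and then tracks the two parsings of $w$ from position $k$ onward. A short forcing shows that $u_{k}u_{k+1}\cdots=\mathtt a\mathtt b\mathtt b\cdots$ while $v_{k}v_{k+1}\cdots=\mathtt b\mathtt b\mathtt b\cdots$, so the partial image lengths are $3+2s$ versus $2(s+1)$; since these never coincide and $|u|=|v|$, the total lengths of $\phi_u(u)$ and $\phi_v(v)$ disagree, a contradiction. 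In particular the paper's argument only needs $|u|=|v|$, so it proves the stronger statement that $\phi$ is injective on \emph{all} words of a given length.

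Your route trades this local length-counting for a structural induction: you cut $w$ at the canonical positions $f_{2n-1}$, $f_{2n+1}$ (resp.\ $f_{2n}$ or $f_{2n-1}$) and reduce to level $n-1$, with the cross case for $B_n$ handled by the auxiliary lemma that no word of $A_m$ ends in a word of $B_m$, hence $A_mB_m\cap B_mA_m=\emptyset$. This is heavier machinery for the proposition at hand, but the disjointness $A_mB_m\cap B_mA_m=\emptyset$ is a clean structural fact that does not seem to drop out of the paper's proof and could be useful elsewhere. Either approach is fine; just be aware that the hypothesis $u,v\in A_n$ (or $B_n$) in the statement is used only through $|u|=|v|$, and the paper exploits precisely that.
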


\begin{proof}
Let $u\neq v$ and assume that $w\in \{\phi(u)\} \cap \{\phi(v)\}$. Denote by $\phi_u$ and $\phi_v$ the special choices of $\phi$ such that $w=\phi_u(u) =\phi_v(v)$. Let $k$ be the first position such that $u_k \neq v_k$ where $u=u_1 u_2\ldots u_m$ and $v=v_1 v_2\ldots v_m$. Then we may assume $u_k = \texttt{a}$ and $v_k=\texttt{b}$, otherwise just swap the names of $u$ and $v$.
Since we have $\phi(\texttt{a}) = \texttt{baa}$, we see that we must have $\phi_v(v_k)= \phi_v(\texttt{b}) = \texttt{ba}$. But then also $\phi_v(v_k v_{k+1}) = \phi_v(\texttt{bb}) = \texttt{baab}$. This then implies $u_{k+1} = \texttt{b}$, since if we have $u_{k+1} = \texttt{a}$ then there must be two consecutive $\texttt{a}$s in $w$ and we could not find a continuation in $v$. Hence we have $\phi_u(u_k u_{k+1}) = \phi_u(\texttt{ab}) = \texttt{baaba}$. As previously, $v$ must continue with a $\texttt{b}$. We now see that we are in a cycle, where $|\phi_u(u_k u_{k+1}\ldots u_{k+s})| = 3 + 2s$ and $|\phi_v(v_k v_{k+1}\ldots v_{k+s})| = 2(s+1)$. Since there is no $s \in \mathbb{N}$ such that we have $3 + 2s = 2(s+1)$, we conclude that there can be no such $w$. 
\end{proof}

We can now turn to the question of counting the elements in the sets $A_n$ and $B_n$. 

\begin{proposition}
\label{prop: size of An Bn}
For $n\geq 2$ we have 
\[	|A_n| = 2^{f_{2n-3}-1} \quad\textnormal{\textit{and}} \quad |B_n| = 2^{f_{2n-4}+1}.
\]
\end{proposition}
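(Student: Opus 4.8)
The plan is to prove both formulas simultaneously by induction on $n$, using the recursions $A_n = B_{n-1}A_{n-1}A_{n-1}$ and $B_n = A_{n-1}B_{n-1} \cup B_{n-1}A_{n-1}$ together with the injectivity fact from Proposition \ref{prop: phi injectiv}. The key observation is that $A_n$ and $B_n$ are precisely the sets of possible images $\{\phi(x)\}$ as $x$ ranges over $A_{n-1}$ and $B_{n-1}$ respectively (this is immediate from Definition \ref{def: recursive def An and Bn}: applying $\phi$ to a word replaces each $\mathtt{a}$ by $\mathtt{baa}$ and each $\mathtt{b}$ independently by $\mathtt{ab}$ or $\mathtt{ba}$), so counting $|A_n|$ amounts to counting, over all $a \in A_{n-1}$, the number of distinct words obtainable from $a$ by the random rule, and similarly for $B_n$.

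First I would record the base case and set up the bookkeeping. For a word $w$, the number of words in $\{\phi(w)\}$ is exactly $2^{|w|_{\mathtt{b}}}$, since each $\mathtt{b}$ gives an independent binary choice and each such choice produces a genuinely different output word within $\{\phi(w)\}$. By Proposition \ref{prop: phi injectiv}, for $u \neq v$ in $A_{n-1}$ the sets $\{\phi(u)\}$ and $\{\phi(v)\}$ are disjoint, and $A_n = \bigcup_{u \in A_{n-1}} \{\phi(u)\}$; hence $|A_n| = \sum_{u \in A_{n-1}} 2^{|u|_{\mathtt{b}}}$. The same reasoning with $B_{n-1}$ gives $|B_n| = \sum_{u \in B_{n-1}} 2^{|u|_{\mathtt{b}}}$. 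So the crux is to understand the statistic $|u|_{\mathtt{b}}$ on words in $A_{n-1}$ and $B_{n-1}$. Here I would note that the number of $\mathtt{b}$'s in a word of $A_{n-1}$ is the same for every element of $A_{n-1}$ — because $\phi$ acts letter-by-letter with fixed letter-counts ($\mathtt{a} \mapsto$ two $\mathtt{a}$'s and one $\mathtt{b}$, $\mathtt{b} \mapsto$ one $\mathtt{a}$ and one $\mathtt{b}$ regardless of the random choice) — so the number of $\mathtt{b}$'s depends only on the letter-counts of the previous generation, which are themselves deterministic. Writing $\alpha_n = |a|_{\mathtt{a}}$, $\beta_n = |a|_{\mathtt{b}}$ for $a \in A_n$ and $\gamma_n = |b|_{\mathtt{a}}$, $\delta_n = |b|_{\mathtt{b}}$ for $b \in B_n$, the recursions give linear recurrences for these quantities which solve in terms of Fibonacci numbers; in particular one expects $\beta_n = f_{2n-3}$ and $\delta_n = f_{2n-4}$ after checking small cases against $|a| = f_{2n}$, $|b| = f_{2n-1}$ and the identity $\alpha_n + \beta_n = f_{2n}$, etc.

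With that in hand the induction closes quickly: $|A_n| = \sum_{u \in A_{n-1}} 2^{|u|_{\mathtt{b}}} = |A_{n-1}| \cdot 2^{\beta_{n-1}}$ and $|B_n| = |B_{n-1}| \cdot 2^{\delta_{n-1}}$, and substituting the claimed values $|A_{n-1}| = 2^{f_{2n-5}-1}$, $|B_{n-1}| = 2^{f_{2n-6}+1}$, $\beta_{n-1} = f_{2n-5}$, $\delta_{n-1} = f_{2n-6}$ — wait, one must be careful that $A_n$ is built from $B_{n-1}$ and $A_{n-1}$ while $B_n$ is built from $A_{n-1}$ and $B_{n-1}$, so in fact $|A_n| = |B_{n-1}| \cdot |A_{n-1}|^2 \cdot (\text{overcount correction})$ is the wrong way to see it; the clean route is genuinely the image description $A_n = \{\phi(a) : a \in A_{n-1}\}$, which requires checking that $\phi$ applied to $A_{n-1}$ does reproduce Definition \ref{def: recursive def An and Bn}'s $B_{n-1}A_{n-1}A_{n-1}$ structure. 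I would verify this compatibility explicitly (it follows since $r_{n-1} = \phi^{n-2}(\mathtt{a})$ starts with whichever letter and the recursion just encodes $\phi(\mathtt{a}) = \mathtt{baa}$, $\phi(\mathtt{b}) \in \{\mathtt{ab},\mathtt{ba}\}$ applied positionwise), then combine $|A_n| = |A_{n-1}| 2^{\beta_{n-1}}$ with $|B_n| = |B_{n-1}| 2^{\delta_{n-1}}$ and the Fibonacci identity $f_{2n-3} = f_{2n-5} + f_{2n-4}$ to match the claimed exponents.

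The main obstacle I anticipate is pinning down the exponent arithmetic cleanly — specifically proving that every word in $A_{n-1}$ (resp. $B_{n-1}$) has the same number of $\mathtt{b}$'s and identifying that number as the right Fibonacci index, and then juggling the Fibonacci identities so that $2^{f_{2n-5}-1} \cdot 2^{f_{2n-4}} = 2^{f_{2n-3}-1}$ and $2^{f_{2n-6}+1} \cdot 2^{f_{2n-5}} = 2^{f_{2n-4}+1}$ come out exactly, including getting the $\pm 1$ constants right from the base case $|A_2|, |B_2|$. None of this is deep, but it is the part where an off-by-one in a Fibonacci index would break the whole statement, so I would treat the base cases $n = 2, 3$ carefully and state the letter-count recursions as an explicit sub-lemma before running the induction.
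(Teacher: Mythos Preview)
Your approach for $|A_n|$ is essentially the paper's: view $A_n$ as the disjoint union $\bigcup_{u\in A_{n-1}}\{\phi(u)\}$ (disjointness by Proposition~\ref{prop: phi injectiv}), note that every $u\in A_{n-1}$ has the same number of $\mathtt{b}$'s, and telescope the resulting recursion $|A_n|=|A_{n-1}|\cdot 2^{|u|_{\mathtt b}}$.

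For $|B_n|$ you take a different route. You run the same argument in parallel, using $B_n=\bigcup_{v\in B_{n-1}}\{\phi(v)\}$ and a second induction $|B_n|=|B_{n-1}|\cdot 2^{|v|_{\mathtt b}}$. The paper instead avoids a second induction entirely: once $|A_n|$ is known for all $n$, the product decomposition $A_{n+1}=B_nA_nA_n$ together with the fact that all words in $B_n$ (resp.\ $A_n$) have the same length gives an injective concatenation map $B_n\times A_n\times A_n\to A_{n+1}$, hence $|B_n|=|A_{n+1}|/|A_n|^2$. Your route is perfectly valid and arguably more symmetric; the paper's is a line shorter and does not require computing the $\mathtt b$-count $\delta_n$ for $B_n$.

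One bookkeeping correction: the values you guess mid-proposal are off by one Fibonacci index. For $x\in A_n$ one has $|x|_{\mathtt b}=f_{2n-2}$ (e.g.\ $A_2=\{\mathtt{baa}\}$ gives $1=f_2$), and for $y\in B_n$ one has $|y|_{\mathtt b}=f_{2n-3}$ (e.g.\ $B_2=\{\mathtt{ab},\mathtt{ba}\}$ gives $1=f_1$). You in fact use the correct values in the final identities you write down, so this is exactly the off-by-one hazard you flagged rather than a real gap; just fix the stated sub-lemma accordingly.
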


\begin{proof}
Let us start with the proof of the the size of $A_n$. From the \mbox{Definition \ref{def: recursive def An and Bn}} of $A_n$ and $B_n$ it follows by induction that $|x|_{\mathtt{b}} = f_{2n-2}$ for $x\in A_n$. Combining this with Proposition \ref{prop: phi injectiv} we find the recursion
\begin{equation}
\label{eq: size of an rec}
|A_n| 
= |A_{n-1}| \cdot 2^{|x|_{\mathtt{b}}}  
= |A_{n-1}| \cdot 2^{f_{2n-4}}. 
\end{equation}
The size of $A_n$ now follows from (\ref{eq: size of an rec}) by induction. For the size of $B_n$ we have, by the definition of $B_n$ and that we already know the size of $A_n$, 
\[	|B_{n}| = \frac{|A_{n+1}|}{|A_{n}||A_{n}|} 
= \frac{2^{f_{2n-1}-1}}{2^{f_{2n-3}-1}\cdot 2^{f_{2n-3}-1}} 
= 2^{f_{2n-4}+1},
\]
which completes the proof. 
\end{proof}

From Proposition \ref{prop: size of An Bn} the statements of the logarithmic limits of the sets $A_n$ and $B_n$ in Theorem \ref{thm: main} follows directly. Our next step is to give some result on sets of prefixes of $A_n$ and $B_n$. These results will play a central role when we later look at sets of sub-words.

\begin{proposition}
\label{prop: An sub An1, AnBn sub BnAn}
For $n\geq 2$ we have 
\begin{align}
A_n[1,f_{2n}-1] &\subset A_{n+1}[1,f_{2n}-1], \label{eq: An sub An1} \\
A_n[1,f_{2n}-1] &\subset \big(B_nA_n\big)[1,f_{2n}-1]. \label{eq: AnBn sub BnAn}
\end{align}
\end{proposition}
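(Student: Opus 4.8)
The plan is to establish \eqref{eq: An sub An1} by induction on $n$ and then read off \eqref{eq: AnBn sub BnAn} almost for free. The one structural observation that makes everything work is this: if $w\in A_n$ is written, following Definition~\ref{def: recursive def An and Bn}, as $w=\beta\alpha_1\alpha_2$ with $\beta\in B_{n-1}$ and $\alpha_1,\alpha_2\in A_{n-1}$ (a decomposition that is unambiguous because all elements of $B_{n-1}$, resp.\ $A_{n-1}$, have a common length), then $|\beta|+|\alpha_1|=f_{2n-3}+f_{2n-2}=f_{2n-1}$, which is exactly the length of an element of $B_n$, and in fact $\beta\alpha_1\in B_{n-1}A_{n-1}\subset B_n$. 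Since $|w|=f_{2n}$, we get the identity
\[
w[1,f_{2n}-1]=(\beta\alpha_1)\,\alpha_2[1,f_{2n-2}-1],
\]
i.e.\ the truncated word is a (full) $B_n$-word followed by an $A_{n-1}$-word with its last letter removed; this is the form I will match on the right-hand sides.

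For the inductive step of \eqref{eq: An sub An1}, given $w=\beta\alpha_1\alpha_2\in A_n$ I would invoke the induction hypothesis in the shape $A_{n-1}[1,f_{2n-2}-1]\subset A_n[1,f_{2n-2}-1]$ to obtain some $a_1\in A_n$ with $a_1[1,f_{2n-2}-1]=\alpha_2[1,f_{2n-2}-1]$, and then pick any $a_2\in A_n$. The word $(\beta\alpha_1)\,a_1a_2$ lies in $B_nA_nA_n=A_{n+1}$, and since $0\le f_{2n}-1-f_{2n-1}=f_{2n-2}-1<f_{2n}=|a_1|$ its prefix of length $f_{2n}-1$ equals $(\beta\alpha_1)\,a_1[1,f_{2n-2}-1]=(\beta\alpha_1)\,\alpha_2[1,f_{2n-2}-1]=w[1,f_{2n}-1]$, as required. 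The base case $n=2$ is the single verified inclusion $\{\mathtt{ba}\}=A_2[1,2]\subset A_3[1,2]=\{\mathtt{ab},\mathtt{ba}\}$; alternatively one may start the induction at $n=1$ by reading $A_1[1,0]$ as the singleton consisting of the empty word.

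For \eqref{eq: AnBn sub BnAn} I would recycle the same decomposition: given $w=\beta\alpha_1\alpha_2\in A_n$, set $b:=\beta\alpha_1\in B_n$ and use \eqref{eq: An sub An1} at level $n-1$ to choose $a\in A_n$ with $a[1,f_{2n-2}-1]=\alpha_2[1,f_{2n-2}-1]$. Then $ba\in B_nA_n$ and, by the same length count, $(ba)[1,f_{2n}-1]=b\,a[1,f_{2n-2}-1]=w[1,f_{2n}-1]$, so $w[1,f_{2n}-1]\in(B_nA_n)[1,f_{2n}-1]$. Thus no separate induction is needed for the second inclusion.

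The only genuinely fiddly part — and the step I would write out with care — is the Fibonacci bookkeeping: one must repeatedly use $f_{2n-1}=f_{2n-2}+f_{2n-3}$ and $f_{2n}=2f_{2n-2}+f_{2n-3}$ to be sure that the truncation index $f_{2n}-1$ lands strictly inside the final $A_n$-block of the rewritten word, so that the prefixes really do coincide (and to check the degenerate small cases, where $f_{2n-2}-1$ can be $0$). Everything else is purely combinatorial; in particular no case distinction on the letters $\mathtt a,\mathtt b$ is needed. The reason \eqref{eq: An sub An1} can be made to feed itself, with no auxiliary statement about prefixes of the $B_n$, is exactly the observation in the first paragraph: the head of every $A_n$-word is automatically a $B_n$-word.
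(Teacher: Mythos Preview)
Your proof is correct and follows essentially the same route as the paper's: the key observation $B_{n-1}A_{n-1}\subset B_n$ lets you rewrite $A_n=B_{n-1}A_{n-1}A_{n-1}$ as a $B_n$-block followed by a truncated $A_{n-1}$-block, to which the induction hypothesis applies, and then \eqref{eq: AnBn sub BnAn} falls out of \eqref{eq: An sub An1} at level $n-1$ with no further work. The only cosmetic difference is that the paper carries out the argument at the set level (chains of inclusions of prefix-sets) while you trace an individual word; incidentally, your base-case computation $A_2[1,2]=\{\mathtt{ba}\}$ is the correct one.
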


\begin{proof}
Let us first consider (\ref{eq: An sub An1}). We give a proof by induction on $n$. For the basis case, $n=2$, we have 
\begin{align*}
  A_2[1,f_{2\cdot2}-1] = A_2[1,2] =  \{\mathtt{ab}\} \subset\{\mathtt{ab},\mathtt{ba}\} = A_3[1,f_4-1].
\end{align*}
Now assume for induction that (\ref{eq: An sub An1}) holds for $2\leq n \leq p$. Then for $n=p+1$ we have by the induction assumption 
\begin{align*}
A_{p+1}[1,f_{2(p+1)}-1] 
&= \big( B_p A_p A_p \big)[1,f_{2(p+1)}-1] \\
&\subseteq \big( (A_p B_p \cup B_p A_p) A_p \big)[1,f_{2(p+1)}-1] \\
&= \big( B_{p+1} A_p \big)[1,f_{2(p+1)}-1] \\
&= B_{p+1} \big( A_p [1,f_{2p}-1] \big)\\
&\subset B_{p+1} \big( A_{p+1} [1,f_{2p}-1] \big)\\
&= \big(B_{p+1} A_{p+1}\big) [1,f_{2(p+1)}-1] \\
&= \big(B_{p+1} A_{p+1} A_{p+1} \big) [1,f_{2(p+1)}-1] \\
&= A_{p+2} [1,f_{2(p+1)}-1],
\end{align*}
which completes the induction and the proof of (\ref{eq: An sub An1}). Let us turn to the proof of (\ref{eq: AnBn sub BnAn}). By the help of (\ref{eq: An sub An1}) we have
\begin{align*}
A_n[1,f_{2n}-1] 
&= \big(B_{n-1}A_{n-1}A_{n-1}\big)[1,f_{2n}-1] \\
&= B_{n-1}A_{n-1}\big(A_{n-1}[1,f_{2(n-1)}-1]\big) \\
&\subset B_{n-1}A_{n-1}\big(A_{n}[1,f_{2(n-1)}-1]\big) \\
&= \big(B_{n-1}A_{n-1}A_{n}\big)[1,f_{2n}-1] \\
&\subseteq \big(B_{n}A_{n}\big)[1,f_{2n}-1]
\end{align*}
which concludes the proof.
\end{proof}

From Proposition \ref{prop: An sub An1, AnBn sub BnAn} it is straight forward, by recalling the recursive definition of $A_n$ and $B_n$, to derive the following equalities on prefix-sets.

\begin{corollary}
\label{cor: An prefix equal Bn prefix}
For $n\geq3$ we have
\begin{align*}
A_n[1,f_{2(n-1)}-1] &= A_{n+1}[1,f_{2(n-1)}-1], \\
B_n[1,f_{2(n-1)}-1] &= A_n[1,f_{2(n-1)}-1],     \\
B_n &= B_{n+1}[1,f_{2n-1}].                     
\end{align*}
\end{corollary}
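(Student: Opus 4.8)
The plan is to obtain all three equalities mechanically from the two inclusions of Proposition~\ref{prop: An sub An1, AnBn sub BnAn} together with the recursive Definition~\ref{def: recursive def An and Bn}, using throughout the length data recorded above: every word of $A_n$ has length $f_{2n}$ and every word of $B_n$ has length $f_{2n-1}$. The only further tool is the trivial remark that a prefix of a concatenation of such uniform-length blocks splits at a block boundary when the lengths permit: if $x$ has length $\ell_1$ and $y$ has length $\ell_2$, then $(xy)[1,m] = x\,\bigl(y[1,m-\ell_1]\bigr)$ whenever $\ell_1 \leq m \leq \ell_1 + \ell_2$, and similarly for products of sets of such words.

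First I would record the key fact that $A_n[1,f_{2n-1}] \subseteq B_n$. Writing the concatenation prefix $(B_nA_n)[1,f_{2n}-1] = B_n\bigl(A_n[1,f_{2n-2}-1]\bigr)$, which is allowed since $f_{2n-1} \leq f_{2n}-1$, inclusion (\ref{eq: AnBn sub BnAn}) becomes $A_n[1,f_{2n}-1] \subseteq B_n\bigl(A_n[1,f_{2n-2}-1]\bigr)$; truncating both sides to their length-$f_{2n-1}$ prefixes gives $A_n[1,f_{2n-1}] \subseteq B_n$. Truncating further to length $f_{2(n-1)}-1$ already yields $A_n[1,f_{2(n-1)}-1] \subseteq B_n[1,f_{2(n-1)}-1]$.

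For the opposite inclusion, and hence the second identity, I would expand both sides with Definition~\ref{def: recursive def An and Bn} and read off length-$(f_{2n-2}-1)$ prefix sets. In $B_n = A_{n-1}B_{n-1} \cup B_{n-1}A_{n-1}$, the block $A_{n-1}B_{n-1}$ contributes $A_{n-1}[1,f_{2n-2}-1]$ (since words of $A_{n-1}$ already have length $f_{2n-2}$), and $B_{n-1}A_{n-1}$ contributes $B_{n-1}\bigl(A_{n-1}[1,f_{2n-4}-1]\bigr)$ (since words of $B_{n-1}$ have length $f_{2n-3}$ and $f_{2n-2}-1-f_{2n-3}=f_{2n-4}-1$). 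In $A_n = B_{n-1}A_{n-1}A_{n-1}$ the initial $B_{n-1}A_{n-1}$ already supplies $f_{2n-1} > f_{2n-2}-1$ letters, so the length-$(f_{2n-2}-1)$ prefix set of $A_n$ is again $B_{n-1}\bigl(A_{n-1}[1,f_{2n-4}-1]\bigr)$. Thus the second identity reduces to $A_{n-1}[1,f_{2n-2}-1] \subseteq B_{n-1}\bigl(A_{n-1}[1,f_{2n-4}-1]\bigr)$, which is inclusion (\ref{eq: AnBn sub BnAn}) at level $n-1$ after the same rewriting; this needs only $n\geq3$ so that the proposition is available at level $n-1$. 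The first identity then follows at once: $\subseteq$ is (\ref{eq: An sub An1}) truncated to length $f_{2(n-1)}-1$, and $\supseteq$ holds because the length-$(f_{2(n-1)}-1)$ prefix set of $A_{n+1} = B_nA_nA_n$ is just $B_n[1,f_{2(n-1)}-1]$ (as $f_{2n-1}>f_{2n-2}-1$), which equals $A_n[1,f_{2(n-1)}-1]$ by the second identity. For the third identity, the length-$f_{2n-1}$ prefix set of $B_{n+1} = A_nB_n \cup B_nA_n$ is $A_n[1,f_{2n-1}] \cup B_n$ (words of $A_n$ have length $f_{2n}>f_{2n-1}$, words of $B_n$ have length exactly $f_{2n-1}$), and this equals $B_n$ by the key fact above.

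I do not anticipate a real obstacle; the whole corollary is bookkeeping. The only things demanding attention are that every use of the splitting rule $(xy)[1,m] = x\,\bigl(y[1,m-\ell_1]\bigr)$ be matched with the Fibonacci inequality that makes it valid — all of which hold for $n\geq3$ — and that the degenerate truncations occurring at the base index $n=3$, such as $A_2[1,f_2-1] = A_2[1,0]$, be read as the empty prefix so that this case is genuinely covered.
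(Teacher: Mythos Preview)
Your proposal is correct and carries out precisely the derivation the paper only sketches: the paper gives no detailed proof here, stating merely that the corollary follows ``straight forward'' from Proposition~\ref{prop: An sub An1, AnBn sub BnAn} together with the recursive Definition~\ref{def: recursive def An and Bn}, and your argument is an explicit realisation of exactly that. The bookkeeping and the handling of the degenerate $n=3$ case are fine.
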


We end the section by proving a result on suffixes of the sets $A_n$ and $B_n$ that we shall make use of in the next sections.

\begin{proposition}
\label{prop: An suf sub of Bn}
For $n\geq2$ we have
\begin{align}
  A_n[f_{2n-2}+2, f_{2n}] &\subseteq B_n[2,f_{2n-1}], \label{eq: An suf sub of Bn} \\
        B_n[2,f_{2n-1}] &= B_{n+1}[f_{2n}+2,f_{2n+1}]. \label{eq: Bn = Bn+1}
\end{align}
\end{proposition}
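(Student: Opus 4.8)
The plan is to prove the inclusion (\ref{eq: An suf sub of Bn}) first, by induction on $n$, and then to read off (\ref{eq: Bn = Bn+1}) as a short consequence of it together with the recursion $B_{n+1} = A_nB_n \cup B_nA_n$. For the base case $n=2$ I would just compute directly: $A_2 = \{\mathtt{baa}\}$ gives $A_2[f_2+2,f_4] = \{\mathtt{a}\}$, while $B_2 = \{\mathtt{ab},\mathtt{ba}\}$ gives $B_2[2,f_3] = \{\mathtt{a},\mathtt{b}\}$, so the inclusion holds.

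For the inductive step ($n\geq3$) I would write a generic word of $A_n = B_{n-1}A_{n-1}A_{n-1}$ as $b\,a_1\,a_2$ with block lengths $f_{2n-3},\,f_{2n-2},\,f_{2n-2}$, occupying positions $[1,f_{2n-3}]$, $[f_{2n-3}+1,f_{2n-1}]$ and $[f_{2n-1}+1,f_{2n}]$, and locate the position $f_{2n-2}+2$ inside this decomposition. Using $f_{2n-3}\geq2$ (valid precisely because $n\geq3$) one checks $f_{2n-3} < f_{2n-2}+2 \leq f_{2n-1}$, so the cut lands inside the first $A_{n-1}$-block at its local position $f_{2n-2}+2-f_{2n-3}=f_{2n-4}+2$, giving
\[
A_n[f_{2n-2}+2,f_{2n}] = A_{n-1}[f_{2n-4}+2,f_{2n-2}]\,A_{n-1}.
\]
After shifting indices down one level the first factor is exactly the left-hand side of (\ref{eq: An suf sub of Bn}) at level $n-1$, so the induction hypothesis puts it inside $B_{n-1}[2,f_{2n-3}]$. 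Since deleting the leading letter of a word in $B_{n-1}A_{n-1}$ (whose $B_{n-1}$-block has length $f_{2n-3}$) produces exactly an element of $B_{n-1}[2,f_{2n-3}]\,A_{n-1}$, and $B_{n-1}A_{n-1}\subseteq B_n$, this yields $A_n[f_{2n-2}+2,f_{2n}] \subseteq (B_{n-1}A_{n-1})[2,f_{2n-1}] \subseteq B_n[2,f_{2n-1}]$, closing the induction.

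Finally, for (\ref{eq: Bn = Bn+1}) I would split $B_{n+1} = A_nB_n \cup B_nA_n$ and chase the window $[f_{2n}+2,f_{2n+1}]$ through each factor: for $w=ab\in A_nB_n$ the window is the suffix $b[2,f_{2n-1}]$ of the $B_n$-factor, while for $w=ba\in B_nA_n$ (using $f_{2n-1} < f_{2n}+2$) it is $a[f_{2n-2}+2,f_{2n}]$ inside the $A_n$-factor. Hence $B_{n+1}[f_{2n}+2,f_{2n+1}] = B_n[2,f_{2n-1}] \cup A_n[f_{2n-2}+2,f_{2n}]$, and (\ref{eq: An suf sub of Bn}) collapses the union to $B_n[2,f_{2n-1}]$. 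I expect the only genuinely delicate point to be the Fibonacci-index bookkeeping — making sure each cut falls in the intended block, which is exactly where the hypothesis $n\geq3$ (equivalently $f_{2n-3}\geq2$) enters — together with the verification of the base case; after that the argument is just matching up recursive decompositions.
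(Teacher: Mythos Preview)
Your proof is correct and follows essentially the same approach as the paper: induction on $n$ for (\ref{eq: An suf sub of Bn}) via the decomposition $A_n=B_{n-1}A_{n-1}A_{n-1}$, reducing to $A_{n-1}[f_{2n-4}+2,f_{2n-2}]\,A_{n-1}\subseteq (B_{n-1}A_{n-1})[2,f_{2n-1}]\subseteq B_n[2,f_{2n-1}]$, and then (\ref{eq: Bn = Bn+1}) from splitting $B_{n+1}=A_nB_n\cup B_nA_n$ and applying (\ref{eq: An suf sub of Bn}). The index bookkeeping (in particular $f_{2n-2}+2\le f_{2n-1}$ iff $f_{2n-3}\ge 2$, i.e.\ $n\ge 3$) is handled correctly.
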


\begin{proof}
We give a proof by induction on $n$. For the basis case, $n=2$, we have
\[	A_2[f_{2}+2, f_4]= A_2[2,3] = \{\mathtt{a}\} \subseteq
\{\mathtt{a},\mathtt{b}\} = B_2[2,2].
\]
Now assume for induction that (\ref{eq: An suf sub of Bn}) holds for $2\leq n\leq p$. Then for the induction step, $n=p+1$, we have by the induction assumption
\begin{align*}
A_{p+1}[f_{2(p+1)-2}+2, f_{2(p+1)}]
&= \big(B_pA_pA_p\big)[f_{2(p+1)-2}+2, f_{2(p+1)}] \\
&= \big(A_pA_p\big)[f_{2p-2}+2, 2f_{2p}] \\
&= \big(A_p[f_{2p-2}+2, f_{2p}]\big)A_p \\
&\subseteq \big(B_p[2, f_{2p-1}]\big)A_p \\
&= \big(B_pA_p[2, f_{2p+1}]\big) \\
&\subseteq  B_{p+1}[2, f_{2(p+1)-1}],
\end{align*}
which completes the induction and the proof of (\ref{eq: An suf sub of Bn}).
For the proof of (\ref{eq: Bn = Bn+1}) we have 
\[	B_n[2,f_{2n-1}] 
= \big(A_nB_n\big)[f_{2n}+2,f_{2n}+f_{2n-1}]
\subseteq B_{n+1}[f_{2n}+2,f_{2n}+f_{2n-1}] 
\]
and for the converse inclusion we have by (\ref{eq: An suf sub of Bn})
\begin{align*}
      B_{n+1}[f_{2n}+2,f_{2n}+f_{2n-1}] 
      &= \big(A_nB_n \cup B_nA_n\big) [f_{2n}2,f_{2n}+f_{2n-1}] \\
      &= \big(B_n[2,f_{2n-1}]\big) \cup \big(A_n[f_{2n-2}+2, f_{2n}]\big) \\
      &\subseteq B_n[2,f_{2n-1}],
\end{align*}
which proves the equality (\ref{eq: Bn = Bn+1}).
\end{proof}

\section{Sets of sub-words}

Here we investigate properties of the sets of sub-words $F(A,m)$ and $F(B,m)$. We will prove that they coincide and moreover we show how to find them by considering finite sets, which will be central when estimating their size depending on $m$. 

First we turn our attention to proving that it is indifferent if we consider sub-words of $A_n$ or of $B_n$.

\begin{proposition}
\label{prop: FA eq FB}
For $n\geq 1$ we have 
\[	F(A_{n+1},f_{2n}-1) = F(B_{n+1},f_{2n}-1).
\]
\end{proposition}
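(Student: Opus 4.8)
The plan is to strip one generation from each side, reduce to two set-inclusions between subword sets of two-block words, and then match subwords block by block against the fixed skeletons $B_nA_nA_n$ and $A_nB_n\cup B_nA_n$. Throughout write $m=f_{2n}-1$; the cases $n\le 2$ are trivial (for $n=1$) or a short hand check (for $n=2$), so assume $n\ge 3$. Since a word of $A_n$ has length $f_{2n}$, a subword of length $m$ of a word of $A_{n+1}=B_nA_nA_n$ cannot meet all three sub-blocks at once — meeting the two outer ones would force it to contain a whole $A_n$-block plus two further letters, of length $f_{2n}+1>m$ — so it lies inside two consecutive sub-blocks. Hence $F(A_{n+1},m)=F(B_nA_n,m)\cup F(A_nA_n,m)$, and likewise $F(B_{n+1},m)=F(A_nB_n,m)\cup F(B_nA_n,m)$. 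The term $F(B_nA_n,m)$ being common, the statement reduces to $F(A_nA_n,m)\subseteq F(B_{n+1},m)$ and $F(A_nB_n,m)\subseteq F(A_{n+1},m)$. In each of these, a subword of length $m$ of a two-block word $XY$ either lies inside one factor — then it belongs to $F(A_n,m)$, which is trivially contained in $F(A_nB_n,m)$, $F(B_nA_n,m)$ and $F(A_nA_n,m)$ alike — or is a \emph{straddling word}: a length-$\ell$ suffix of an $X$-word followed by a length-$(m-\ell)$ prefix of a $Y$-word. So only straddling words require attention.

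For the first inclusion, let $s$ be a straddling word of $A_nA_n$. If its prefix part has length $m-\ell\le f_{2n-2}-1$, then by Corollary \ref{cor: An prefix equal Bn prefix} it is also a prefix of some word of $B_n$, so $s$ is a subword of a word of $A_nB_n$. Otherwise $\ell\le m-f_{2n-2}=f_{2n-1}-1$, so by Proposition \ref{prop: An suf sub of Bn} the suffix part is also a suffix of some word of $B_n$, and $s$ is a subword of a word of $B_nA_n$. Every straddling word falls in at least one regime, which settles this inclusion.

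For the second inclusion, let $s$ be a straddling word of $A_nB_n$: a length-$\ell$ suffix of $a\in A_n$ followed by a length-$(m-\ell)$ prefix of $b\in B_n$. If $m-\ell\le f_{2n-2}-1$, Corollary \ref{cor: An prefix equal Bn prefix} replaces that prefix by a prefix of a word of $A_n$, making $s$ a subword of a word of $A_nA_n$. Otherwise $\ell\le f_{2n-1}-1$; writing $a=uvw$ with $u\in B_{n-1}$ and $v,w\in A_{n-1}$, the suffix of $a$ overshoots its last block $w$ by at most $f_{2n-1}-1-f_{2n-2}=f_{2n-3}-1$ letters. Here a single application of Proposition \ref{prop: An suf sub of Bn} at level $n$ only shuttles $s$ between words of $A_nB_n$ and of $B_nB_n$; instead I apply Proposition \ref{prop: An suf sub of Bn} at level $n-1$ to rewrite that short overshoot as a suffix of a word of $B_{n-1}$. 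Expanding also $b$ as an element of $A_{n-1}B_{n-1}$ or of $B_{n-1}A_{n-1}$, the word $s$ now reads as a concatenation of level-$(n-1)$ constituent blocks in one of the patterns $A_{n-1}A_{n-1}B_{n-1}$, $A_{n-1}B_{n-1}A_{n-1}$, $B_{n-1}A_{n-1}A_{n-1}B_{n-1}$ or $B_{n-1}A_{n-1}B_{n-1}A_{n-1}$ (the first and last of these blocks possibly only a suffix resp.\ a prefix). Since by Definition \ref{def: recursive def An and Bn} the constituent blocks of an $A_n$- or $B_n$-word are chosen independently, the first three patterns occur inside $A_nA_n=B_{n-1}A_{n-1}A_{n-1}B_{n-1}A_{n-1}A_{n-1}$, while the fourth occurs inside the $B_nA_n$-pattern $B_{n-1}A_{n-1}B_{n-1}A_{n-1}A_{n-1}$; hence $s$ is a subword of a word of $A_{n+1}$, completing the proof.

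The main obstacle is exactly this last case of the second inclusion: the naive substitution keeps $s$ oscillating between words of $A_nB_n$ and $B_nB_n$ without ever landing inside $A_{n+1}$, and the resolution is to observe that the obstructing suffix is short enough to be handled one generation lower and then to do the bookkeeping of which of the two skeletons $B_nA_nA_n$ or $B_nA_n$ the resulting block pattern embeds into.
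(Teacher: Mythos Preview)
Your proof is correct and follows essentially the same strategy as the paper: both arguments rest on Corollary~\ref{cor: An prefix equal Bn prefix} and Proposition~\ref{prop: An suf sub of Bn} to trade $A$-prefixes/suffixes for $B$-prefixes/suffixes. Your organisation differs in that you first reduce to two-block configurations via the length constraint $m=f_{2n}-1<f_{2n}+2$ and then isolate the straddling words, whereas the paper parametrises directly by the starting position $k$ inside $A_{n+1}$ (resp.\ $A_nB_n$). The one substantive difference is in the last subcase of the second inclusion (long prefix of $B_n$): you descend to level $n-1$, replace the short overshoot into $v\in A_{n-1}$ by a $B_{n-1}$-suffix, and then locate the four resulting block patterns $A_{n-1}A_{n-1}B_{n-1}$, $A_{n-1}B_{n-1}A_{n-1}$, $B_{n-1}A_{n-1}A_{n-1}B_{n-1}$, $B_{n-1}A_{n-1}B_{n-1}A_{n-1}$ inside the skeletons of $A_nA_n$ and $B_nA_n$. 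The paper's treatment of this same range is considerably terser (and its displayed computation contains some apparent misprints), so your explicit level drop and pattern bookkeeping make this step more transparent; conversely, the paper's position-indexed approach avoids introducing the four-pattern case split. A minor cosmetic point: ``plus two further letters, of length $f_{2n}+1$'' should read $f_{2n}+2$, though this does not affect the argument.
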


\begin{proof}
Let us first turn to the proof of the inclusion 
\begin{equation}
\label{eq: FA sub of FB}
    F(A_{n+1},f_{2n}-1) \subseteq F(B_{n+1},f_{2n}-1).
\end{equation}
Let $x_{(k)} \in A_{n+1}[k,k-1+f_{2n}-1]$ for $1\leq k \leq f_{2n+1}+2$. It is clear that 
$x_{(k)}\in F(A_{n+1},f_{2n}-1)$ for any $k$. We have to prove that also $x_{(k)}\in F(B_{n+1},f_{2n}-1)$. 

For $1\leq k \leq f_{2n-1}+2$ we have 
\[	x_{(k)} \in F(B_nA_n,f_{2n}-1) \subseteq F(B_{n+1},f_{2n}-1).
\]

For $f_{2n-1}+3 \leq k \leq f_{2n}+1$ we have by Corollary \ref{cor: An prefix equal Bn prefix} that $x_{(k)}$ must be a sub-word of 
\begin{align*}
\big(A_nA_n\big)[3,f_{2n} + f_{2n-2}-1] 
&=  \big(A_nB_n\big)[3,f_{2n} + f_{2n-2}-1] \\
&=  B_{k+1}[3,f_{2n} + f_{2n-2}-1].
\end{align*}

For $f_{2n}+2 \leq k \leq f_{2n+1}+2$ we have By Proposition \ref{prop: An suf sub of Bn}
\begin{align*}
\big(B_nA_nA_n\big)[f_{2n}+2,f_{2n+2}] 
&=  \big(A_n[f_{2n-2}+2,f_{2n}]\big)A_n \\
&\subseteq  \big(B_n[2,f_{2n-1}]\big)A_n \\
&\subseteq  B_{n+1}[2,f_{2n+1}],
\end{align*}
which concludes the proof of the inclusion (\ref{eq: FA sub of FB}). For the converse inclusion it is enough to consider sub-words of $A_nB_n$, since any sub-word of $B_nA_n$ clearly is a sub-word of $A_{n+1}$. Therefore let $y_{(k)} \in (A_nB_n)[k,k-1+f_{2n}-1]$ for $1\leq k \leq f_{2n-1}+1$. We now proceed as in the case above. 

For $1\leq k \leq f_{2n-2}+1$ we have 
\begin{align*}
(A_nB_n)[1,f_{2n}+f_{2n-2}-1]
&=A_n\big(B_n[1,f_{2n-2}-1]\big) \\
&=A_n\big(A_n[1,f_{2n-2}-1]\big) \\
&=A_{n+1}[f_{2n+1}+1,f_{2n+1}+f_{2n-2}-1].
\end{align*}

For $f_{2n-2}+2\leq k \leq f_{2n-1}+2$ we have 
\begin{align*}
(A_nB_n)[f_{2n-2}+2,f_{2n-1}+2]
&= \big( A_n[f_{2n-2}+2,f_{2n}]\big)A_n \\
&= \big( B_n[2,f_{2n-1}]\big)A_n \\
&= A_{n+1}[2,f_{2n+1}],
\end{align*}
which completes the proof. 
\end{proof}

The above result shows that the set of sub-words from $A_n$ and $B_n$ coincide if the sub-words are not chosen too long. If, we consider the limit sets $A$ and $B$, their sets of sub-words turns out to be the same. We have the following

\begin{proposition}
\label{prop: FA = FB}
For $m\geq1$ we have $F(A,m) = F(B,m)$.
\end{proposition}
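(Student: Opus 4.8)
The plan is to deduce $F(A,m)=F(B,m)$ from the finite-level result in Proposition \ref{prop: FA eq FB} by a limiting argument. First I would recall that $F(A,m)=\bigcup_n F(A_n,m)$ and $F(B,m)=\bigcup_n F(B_n,m)$, since every sub-word of a word in the limit set $A$ (resp. $B$) of length $m$ already occurs in some finite-generation word $A_n$ (resp. $B_n$); conversely, by Corollary \ref{cor: An prefix equal Bn prefix} and Proposition \ref{prop: An sub An1, AnBn sub BnAn} the prefixes stabilise, so each finite $A_n$ contributes its sub-words to the limit. Moreover the sequence $F(A_n,m)$ is eventually nondecreasing in $n$: using $A_n[1,f_{2n}-1]\subset A_{n+1}[1,f_{2n}-1]$ together with $A_{n+1}=B_nA_nA_n$, every sub-word of $A_n$ of length $m\le f_{2n}-1$ reappears inside $A_{n+1}$, hence $F(A_n,m)\subseteq F(A_{n+1},m)$ once $n$ is large enough that $f_{2n}-1\ge m$. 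The same holds for $B_n$ via the relation $B_n=B_{n+1}[1,f_{2n-1}]$ from Corollary \ref{cor: An prefix equal Bn prefix}.

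Next, given $m$, I would choose $n$ large enough that $f_{2n}-1\ge m$, so that Proposition \ref{prop: FA eq FB} gives $F(A_{n+1},f_{2n}-1)=F(B_{n+1},f_{2n}-1)$, and therefore, restricting to sub-words of length $m$,
\[
F(A_{n+1},m)=F(B_{n+1},m).
\]
Taking the union over all such $n$ on both sides, and using the monotonicity just described together with $F(A,m)=\bigcup_n F(A_n,m)$ and $F(B,m)=\bigcup_n F(B_n,m)$, yields $F(A,m)=F(B,m)$.

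The only genuine point requiring care — the main obstacle — is justifying the two set-theoretic identities $F(A,m)=\bigcup_n F(A_n,m)$ and the analogue for $B$, i.e. that passing to the limit word introduces no new finite sub-words and loses none. One direction is immediate: a sub-word of $r_n=\phi^{n-1}(\mathtt a)\in A_n$ is a sub-word of the limit $r\in A$. For the other direction one uses that the limit $r$ is, by construction, an increasing limit of the prefixes guaranteed by Proposition \ref{prop: An sub An1, AnBn sub BnAn} and Corollary \ref{cor: An prefix equal Bn prefix}, so any length-$m$ window of $r$ lies within a sufficiently long prefix, which coincides with a prefix of some $A_n$. Once this is in place the argument is essentially bookkeeping, and I would keep the write-up short accordingly.
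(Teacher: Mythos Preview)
Your argument is correct, but it takes a longer route than the paper. You deduce $F(A_{n+1},m)=F(B_{n+1},m)$ for $m\le f_{2n}-1$ from Proposition~\ref{prop: FA eq FB}, then pass to the limit using monotonicity and the identification $F(A,m)=\bigcup_n F(A_n,m)$. This is valid: since every word of $A_{n+1}$ (resp.\ $B_{n+1}$) has length at least $f_{2n}-1$, one has $F(A_{n+1},m)=F\bigl(F(A_{n+1},f_{2n}-1),m\bigr)$ and likewise for $B_{n+1}$, so the length-$m$ restriction of Proposition~\ref{prop: FA eq FB} does give the equality you need.

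The paper's proof, however, bypasses Proposition~\ref{prop: FA eq FB} entirely. It simply observes that the recursions $B_{n+1}=A_nB_n\cup B_nA_n$ and $A_{n+1}=B_nA_nA_n$ immediately give $F(A_n,m)\subseteq F(B_{n+1},m)$ and $F(B_n,m)\subseteq F(A_{n+1},m)$, since every word of $A_n$ literally occurs as a block inside some word of $B_{n+1}$, and vice versa. With the same limiting identifications you already invoke, this yields both inclusions in two lines. So the ``main obstacle'' you identify (that $F(A,m)=\bigcup_n F(A_n,m)$) is common to both approaches and the paper treats it as understood; the genuine difference is that you route through the comparatively heavy Proposition~\ref{prop: FA eq FB} where the bare recursive definitions already suffice. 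Your approach does have the minor virtue of exhibiting Proposition~\ref{prop: FA = FB} as a formal corollary of the finite-level equality, but for this particular statement the direct structural argument is both shorter and more transparent.
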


\begin{proof}
Let $x\in F(A,m)$. Then there is an $n$ such that 
\[	 x\in F(A_n,m) \subseteq F(A_{n}B_{n}\cup B_nA_n,m) = F(B_{n+1},m) \subseteq F(B,m).
\]
Similarly, if $x\in F(B,m)$. Then there is an $n$ such that 
\[	 x\in F(B_n,m) \subseteq F(B_nA_nA_n,m) = F(A_{n+1},m) \subseteq F(A,m),
\]
which completes the proof.
\end{proof}

The direct consequence of Proposition \ref{prop: FA = FB} is that we find the topological entropy in (\ref{eq: main}) independent if we look at sub-words from $A$ or $B$. 

Now, let us turn to the question of finding $F(A,m)$ from a finite set $A_n$ and not having to consider the infinite set $A$. 

\begin{proposition}
\label{prop: FAn1 = FAn2}
For $n\geq 2$ we have 
\[	F(A_{n+1},f_{2n}-f_{2n-3}) = F(A_{n+2},f_{2n}-f_{2n-3}).
\]
\end{proposition}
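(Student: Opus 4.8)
The plan is to show the two sub-word sets agree by establishing inclusion in both directions, with the nontrivial direction being $F(A_{n+2},f_{2n}-f_{2n-3}) \subseteq F(A_{n+1},f_{2n}-f_{2n-3})$. The reverse inclusion should be immediate: every word of $A_{n+1}$ occurs inside words of $A_{n+2} = B_{n+1}A_{n+1}A_{n+1}$ (indeed $A_{n+1}$ is literally a factor of each element of $A_{n+2}$), so any sub-word of length $\ell := f_{2n}-f_{2n-3}$ of a word in $A_{n+1}$ is a sub-word of a word in $A_{n+2}$, giving $F(A_{n+1},\ell)\subseteq F(A_{n+2},\ell)$. Note $\ell = f_{2n}-f_{2n-3}$ is chosen just short enough that a window of that length straddling a seam between two consecutive blocks of length $f_{2n-3}$ (a $B$-block, say) together with its neighbours is already "seen" one generation earlier.

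For the hard direction, I would unfold $A_{n+2}$ one level: writing $A_{n+2} = B_{n+1}A_{n+1}A_{n+1}$ and then $A_{n+1} = B_nA_nA_n$, $B_{n+1} = A_nB_n\cup B_nA_n$, every word of $A_{n+2}$ is a concatenation of blocks each of which lies in $A_n$ or $B_n$, and moreover (by the recursive definitions) this block decomposition is a refinement of the $A_{n+1}$-block decomposition. A window of length $\ell$ meets at most a bounded number of consecutive such blocks; the key length estimate is that $\ell = f_{2n}-f_{2n-3} < f_{2n}$, while the shortest relevant spans (two consecutive $A_n$'s, or $f_{2n}$, etc.) are long enough that any such window is contained in a span of the form (block of $A_n$ or $B_n$)(block of $A_n$)(prefix or suffix), and such spans already occur as sub-words of a single word in $A_{n+1}$. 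Concretely, I would enumerate the possible types of consecutive block-pairs and block-triples appearing across seams in $A_{n+2}$ — these are governed by which words $B_{n+1}A_{n+1}$, $A_{n+1}A_{n+1}$ can produce, hence by the pairs $A_nB_n$, $B_nA_n$, $A_nA_n$, $B_nA_n A_n$, etc. — and use the prefix/suffix transfer results, Corollary~\ref{cor: An prefix equal Bn prefix} and Proposition~\ref{prop: An suf sub of Bn}, to rewrite each such span as a sub-word of $B_nA_nA_n = A_{n+1}$ or of $A_nB_n$, the latter already handled via Proposition~\ref{prop: FA eq FB}.

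The main obstacle, and where the length bound $f_{2n}-f_{2n-3}$ is really used, is controlling windows that cross the seam at the start of $A_{n+2}$ — i.e. the junction inside $B_{n+1} = A_nB_n$ or $B_nA_n$ and then into the following $A_{n+1}$ — since here the block structure of $B_{n+1}$ differs between its two possible values. I expect to handle this by splitting on the value of the leading $B_{n+1}$-block and showing in each case that the relevant length-$\ell$ factor is already a factor of $A_{n+1}A_{n+1} = (B_nA_nA_n)(B_nA_nA_n)$ restricted appropriately, using $\ell \le f_{2n}$ so that no window reaches past two consecutive $A_{n+1}$-level blocks. Once every length-$\ell$ window of every word in $A_{n+2}$ has been exhibited as a length-$\ell$ window of some word in $A_{n+1}$, the inclusion $F(A_{n+2},\ell)\subseteq F(A_{n+1},\ell)$ follows, and combined with the easy reverse inclusion this gives the claimed equality.
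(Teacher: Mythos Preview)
Your proposal is correct and follows essentially the same route as the paper: the easy inclusion is immediate, and for the hard inclusion you unfold $A_{n+2}$ into level-$n$ blocks (via $B_{n+1}=A_nB_n\cup B_nA_n$ and $A_{n+1}=B_nA_nA_n$), then case-split on which consecutive block pattern a length-$\ell$ window straddles, reducing each case to a sub-word of $A_{n+1}$ using the prefix/suffix transfer results (Corollary~\ref{cor: An prefix equal Bn prefix}, Proposition~\ref{prop: An suf sub of Bn}) and Proposition~\ref{prop: FA eq FB} for the $A_nB_n$ pattern. The paper carries this out by listing seven explicit block patterns $A_nA_n$, $B_nA_n$, $A_nB_n$, $A_nB_nA_n$, $B_nB_n$, $A_nB_nB_n$, $B_nB_nA_n$; the only cases requiring real work beyond what you sketched are those with a $B_nB_n$ junction (arising when $B_{n+1}=A_nB_n$ meets the leading $B_n$ of $A_{n+1}$), where one must further split $B_n$ as $A_{n-1}B_{n-1}\cup B_{n-1}A_{n-1}$, so be prepared to descend one more level there.
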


\begin{proof}
It is clear that $F(A_{n+1},f_{2n}-f_{2n-3}) \subseteq F(A_{n+2},f_{2n}-f_{2n-3})$ holds for all $n\geq2$. For the reverse inclusion assume that $x\in F(A_{n+2},f_{2n}-f_{2n-3})$. Note that we can write $A_{n+1}$ and $A_{n+2}$ on the form  
\begin{align}
  A_{n+1} &= B_n A_n A_n, \nonumber\\
  A_{n+2} &= B_n A_n B_n A_n A_n B_n A_n A_n \cup A_n B_n B_n A_n A_n B_n A_n A_n. 
  \label{eq: An2}
\end{align}
From we see (\ref{eq: An2}) that any $x$ is a sub-word of any element in some of the seven sets
\begin{equation}
\label{eq: seven sets}
\begin{array}{*{4}{l}}
A_nA_n,\phantom{A_n} & B_nA_n,    & A_nB_n,   & A_nB_nA_n, \\[1.3ex]
B_nB_n, & A_nB_nB_n, & B_nB_nA_n 
\end{array}
\end{equation}
in such a way that the first letter in $x$ is in the first factor (that is  $A_n$ or $B_n$) of the sets. If $x$ is a sub-word of $A_nA_n$ or $B_nA_n$ or completely contained in $A_n$ it is clear that we have $x\in F(A_{n+1},f_{2n}-f_{2n-3})$. For the case when $x$ is a sub-word of $A_nB_n$ it follows from Proposition \ref{prop: FA eq FB} that we have $x\in F(A_{n+1},f_{2n}-f_{2n-3})$. 

If $x$ is a sub-word of a word in  $A_nB_nA_n$ such that $x$ begins in the first $A_n$ factor and ends in the second. Then we have that $x$ is a sub-word of a word in the set
\begin{align*}
    \big(A_n[f_{2n-3}&+f_{2n-1}+2, f_{2n}]\big) B_{n-1}A_{n-1}\big(A_n[1, f_{2n-4}-1]\big) \\
    &= \big(A_{n}[f_{2n-3}+f_{2n-1}+2, f_{2n}]\big) B_{n-1}A_{n-1}\big(A_{n-1}[1, f_{2n-4}-1]\big) \\
    &= \big(A_{n}[f_{2n-3}+f_{2n-1}+2, f_{2n}]\big) \big(A_{n}[1, f_{2n-1}+f_{2n-4}-1]\big) \\
    &= \big(A_n A_n)[f_{2n-3}+f_{2n-1}+2, f_{2n}+f_{2n-1} + f_{2n-4}-1],
\end{align*}
and we see that we have $x\in F(A_{n+1},f_{2n}-f_{2n-3})$. 

If $x$ is a sub-word of a word in $B_nB_n$ we have, let us first consider the case when it is a sub-word of $B_nB_{n-1}A_{n-1}$. Then it follows that 
\[	B_nB_{n-1}A_{n-1} \subseteq B_n\big(A_n[1,f_{2n-1}]\big) = \big(B_nA_n\big)[1,2f_{2n-1}],
\]
so $x$ is a sub-word of a word in $A_{n+1}$. For the the second case, $B_nA_{n-1}B_{n-1}$, we have 
\begin{align*}
    B_nA_{n-1}B_{n-1} 
    &= A_{n-1}\big(B_{n-1}A_{n-1}\big)B_{n-1} \cup \big(B_{n-1}A_{n-1}A_{n-1}\big)B_{n-1} \\
    &\subseteq \big(A_{n}B_{n}A_{n}\big)[f_{2n-1}+1,3f_{2n-1}] \cup \big(A_{n}B_{n})[1,2f_{2n-1}],
\end{align*}
and again $x$ is a sub-word of a word in $A_{n+1}$, by what we just proved above. 

If $x$ is a sub-word of a word in $A_nB_nB_n$ we have by Corollary \ref{cor: An prefix equal Bn prefix},
\begin{align*}
    \big(A_nB_nB_n\big)&[f_{2n-1}+f_{2n-3}+1,2f_{2n}-f_{2n-3}-1] \\
    &= \big(A_n[f_{2n-1}+f_{2n-3}+1,f_{2n}]\big) B_n \big(B_n[1,f_{2n-4}-1]\big)  \\
    &= \big(A_n[f_{2n-1}+f_{2n-3}+1,f_{2n}]\big) B_n \big(A_n[1,f_{2n-4}-1]\big),
\end{align*}
which shows that $x$ is a sub-word of a word in $A_{n+1}$ by what we previously have shown. 

Finally, if $x$ is a sub-word of a word in $B_nB_nA_n$, we first consider the case when $x$ is a sub-word of a word in $B_nB_{n-1}A_{n-1}A_n$. By Corollary \ref{cor: An prefix equal Bn prefix} we have
\begin{align*}
    \big(B_nB_nA_n\big) & [2f_{2n-3} + 1,f_{2n+1}-f_{2n-3}-1] \\
    &= \big(B_n[2f_{2n-3}+1, f_{2n-1}]\big) B_{n-1}A_{n-1} \big(A_n[1,f_{2n-4}-1]\big)  \\
    &= \big(B_n[2f_{2n-3}+1, f_{2n-1}]\big) B_{n-1}A_{n-1} \big(A_{n-1}[1,f_{2n-4}-1]\big) \\
    &= \big(B_n[2f_{2n-3}+1, f_{2n-1}]\big) \big(A_{n}[1,f_{2n-1}+f_{2n-4}-1]\big),
\end{align*}
which by the help of the previous case shows that $x$ is a sub-word of a word in $A_{n+1}$. For last the case, $B_nA_{n-1}B_{n-1}A_n$, we have by Corollary \ref{cor: An prefix equal Bn prefix} and Proposition \ref{prop: An suf sub of Bn}, 
\begin{align*}
    \big(B_n&B_nA_n\big)  [2f_{2n-3} + 1,f_{2n+1}-f_{2n-3}-1] \\
    &= \big(B_n[2f_{2n-3}+1, f_{2n-1}]\big) A_{n-1}B_{n-1} \big(A_n[1,f_{2n-4}-1]\big)  \\
    &= \big(B_{n-1}[2f_{2n-3}-f_{2n-2}+1, f_{2n-3}]\big) A_{n-1}B_{n-1} \big(A_{n-1}[1,f_{2n-4}-1]\big)  \\
    &= \big(B_{n}[2f_{2n-3}-f_{2n-2}+1, f_{2n-1}]\big) \big(A_n[1,f_{2n-2}-1]\big),
\end{align*}
and again we see that $x$ is a sub-word of a word in $A_{n+1}$ by what we have proven above. 
\end{proof}

The result of Proposition \ref{prop: FAn1 = FAn2} can be extended to hold for sub-words from elements $A_n$ and $A_{n+k}$ where $k\geq 1$. A straight forward argument via induction gives 
\begin{equation}
\label{eq: FAn1 = FAnk}
F(A_{n+1},f_{2n}-f_{2n-3}) = F(A_{n+k},f_{2n}-f_{2n-3}) 
\end{equation}
for $k\geq1$. By combining Proposition \ref{prop: FAn1 = FAn2} and equation (\ref{eq: FAn1 = FAnk}) we can now prove that to find the factors set it is sufficient to only consider a finite set. 

\begin{proposition}
For $n\geq 2$ we have 
\begin{equation}
\label{eq: FAn1 = FA}
F(A_{n+1},f_{2n}-f_{2n-3}) = F(A,f_{2n}-f_{2n-3}).
\end{equation}
\end{proposition}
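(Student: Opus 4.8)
The plan is to deduce this from the two facts already available: Proposition~\ref{prop: FAn1 = FAn2}, which (via the inductive extension~\eqref{eq: FAn1 = FAnk}) gives $F(A_{n+1},f_{2n}-f_{2n-3}) = F(A_{n+k},f_{2n}-f_{2n-3})$ for every $k\geq 1$, and the fact that $A = \lim_{n\to\infty}A_n$ in the sense made precise by Corollary~\ref{cor: An prefix equal Bn prefix}. The inclusion $F(A_{n+1},f_{2n}-f_{2n-3}) \subseteq F(A,f_{2n}-f_{2n-3})$ is immediate, since every word in $A_{n+1}$ is a prefix of (equivalently, agrees on a long initial segment with) words in $A$, so any sub-word of length $f_{2n}-f_{2n-3}$ occurring in $A_{n+1}$ occurs in $A$.

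For the reverse inclusion, suppose $x \in F(A,f_{2n}-f_{2n-3})$. By definition of $F(A,\cdot)$ there is a word $a\in A$ containing $x$ as a sub-word, and $a$ occupies only finitely many positions, so there is some generation $n+k$ (with $k\geq 1$) and a word $a'\in A_{n+k}$ that already contains $x$ as a sub-word; this is exactly where one uses that the limit $A$ is approached through the nested prefix structure of the $A_j$, so a sub-word of $a$ of fixed finite length is already visible in a sufficiently deep finite generation. Hence $x \in F(A_{n+k},f_{2n}-f_{2n-3})$, and by~\eqref{eq: FAn1 = FAnk} this equals $F(A_{n+1},f_{2n}-f_{2n-3})$, giving $x \in F(A_{n+1},f_{2n}-f_{2n-3})$ as required.

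I expect the only delicate point to be making rigorous the statement ``a sub-word of $a\in A$ of length $f_{2n}-f_{2n-3}$ already appears in some $A_{n+k}$.'' One clean way: fix the left endpoint position of the occurrence of $x$ in $a$; since $A = \lim A_j$ and all elements of $A_j$ share ever-longer common prefixes (Corollary~\ref{cor: An prefix equal Bn prefix}), for $j$ large enough every element of $A_j$ agrees with $a$ on an initial segment long enough to cover that occurrence, so $x$ is a sub-word of each such element. Choosing any $j = n+k \geq n+1$ large enough then completes the argument. Alternatively, if one prefers to avoid positional bookkeeping, one can invoke that $A_j[1, f_{2j-2}-1]$ stabilises and that $f_{2j-2}-1 \to \infty$, so every fixed finite-length sub-word of the limit word $r$ (and of any $a\in A$) lies in some $F(A_j, \cdot)$. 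Either route reduces the proposition to a two-line combination with~\eqref{eq: FAn1 = FAnk}.
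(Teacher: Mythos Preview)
Your argument for the reverse inclusion is exactly the paper's: pick a finite generation $m\geq n+1$ in which $x$ already appears and then invoke~\eqref{eq: FAn1 = FAnk}. That part is fine and matches the paper verbatim.

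Your justification of the forward inclusion, however, contains a slip. You write that ``every word in $A_{n+1}$ is a prefix of \dots\ words in $A$,'' but this is false: for instance $A_2=\{\mathtt{baa}\}$, while every word in $A_3=B_2A_2A_2$ begins with $\mathtt{ab}$ or $\mathtt{ba}$, so $\mathtt{baa}$ is not a prefix of any element of $A_3$ (and hence not of any element of $A$). The prefix stabilisation in Corollary~\ref{cor: An prefix equal Bn prefix} only controls the first $f_{2n}-1$ letters of $A_{n+1}$, one short of the full length $f_{2n+2}$. The correct (and equally easy) reason the forward inclusion holds is that $A_{m+1}=B_mA_mA_m$ contains $A_m$ as a \emph{factor}, so $F(A_m,\ell)\subseteq F(A_{m+1},\ell)$ for every $\ell$, and $F(A,\ell)=\bigcup_m F(A_m,\ell)$; this is what the paper means by ``it is clear.'' Once you replace ``prefix'' by ``factor'' in that sentence, your proof is complete and essentially identical to the paper's.
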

\begin{proof}
It is clear that we have $F(A_{n+1},f_{2n}-f_{2n-3}) \subseteq F(A,f_{2n}-f_{2n-3})$. For the reversed inclusion, let $x \in F(A,f_{2n}-f_{2n-3})$. Then there is a smallest $m\geq n+1$ such that $x$ is a sub-word of an element of $A_m$. Then (\ref{eq: FAn1 = FAnk}) gives
\[ x \in F(A_{m},f_{2n}-f_{2n-3}) = F(A_{n+1},f_{2n}-f_{2n-3}),
\]
which shows the desired inclusion.
\end{proof}

\section{Fibonacci numbers revisited}

In this section we shall restate, and adopt for our purpose, some of the Diophantine properties of the Fibonacci numbers, and use them to derive results on the distribution of the letters in the words in the sets $A_n$ and $B_n$. Let us introduce the notation
\[	\tau = \frac{1+\sqrt{5}}{2}\qquad\textnormal{and} \qquad \widehat{\tau} = \frac{1-\sqrt{5}}{2}
\]
for the roots of $x^2-x-1=0$. It is well known that $\tau$ and $\widehat{\tau}$ appears in Binet's formula  the Fibonacci numbers, see \cite{knuth}, 
\begin{equation}
\label{eq: fib explicit}
	f_n =  \frac{1}{\sqrt{5}} \left( \frac{1+\sqrt{5}}{2}\right)^n -
\frac{1}{\sqrt{5}} \left(\frac{1-\sqrt{5}}{2}\right)^n 
		=\frac{1}{\sqrt{5}}\left( \tau^n -\widehat{\tau}^{\,\,n}\right).
\end{equation}
From (\ref{eq: fib explicit}) it is with induction straight forward to derived 
\begin{equation}
\label{eq: fibonacci phi rec}
	f_{n} = \tau f_{n-1} + \widehat{\tau}^{\,\,n-1} = \tau^2 f_{n-2} + \widehat{\tau}^{\,\,n-2}.
\end{equation}

\begin{definition}
Let $\| \cdot \|$ denote the smallest distance to an integer. 
\end{definition}

By using the special property, $\tau^2 = \tau+1$ we have for an integer $k$ the following line of equalities 
\[
  \left\| \frac{1}{\tau^2}\, k \right\|
= \left\| \frac{\tau-1}{\tau}\, k \right\|
= \left\| k-\frac{1}{\tau}\, k \right\|
= \left\| \frac{1}{\tau}\, k \right\|
= \big\| (\tau-1) k \big\|
= \big\| \tau k \big\|.
\]
From (\ref{eq: fibonacci phi rec}) it follows that 
\[      \|\tau f_n\| =  \big\|f_{n+1} - \widehat{\tau}^{\,\,n} \big\| = \frac{1}{\tau^n}
\]
since $\widehat{\tau} = -\frac{1}{\tau}$. For an integer $k$, which is not a Fibonacci number we have the following estimate of how far away from an integer $\tau k$ is.

\begin{proposition}
\label{prop: phi k}
For a positive integer $k$ such that $f_{n-1} < k < f_n$ we have 
\begin{equation}
\label{eq: tau k}
    \|\tau k\| > \frac{1}{\tau^{n-2}}.
\end{equation}
\end{proposition}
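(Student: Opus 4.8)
The plan is to reduce the statement about $\|\tau k\|$ to the known facts $\|\tau f_m\| = \tau^{-m}$ and $\|\tau k\| = \|k/\tau\|$, together with the Zeckendorf representation of $k$. Write $k$ in its Zeckendorf form $k = f_{m_1} + f_{m_2} + \cdots + f_{m_j}$ with $m_1 > m_2 > \cdots > m_j \geq 2$ and no two consecutive indices. Since $f_{n-1} < k < f_n$, the leading term is $f_{m_1} = f_{n-1}$, and because $k \neq f_{n-1}$ there is at least a second term, with $m_2 \leq n-3$.

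First I would compute $\tau k = \sum_i \tau f_{m_i} = \sum_i (f_{m_i + 1} + \widehat{\tau}^{\,m_i})$ using (\ref{eq: fibonacci phi rec}). The integer part contributes $\sum_i f_{m_i+1}$, so $\|\tau k\| = \|\sum_i \widehat{\tau}^{\,m_i}\|$, and the task becomes bounding $|\sum_i \widehat{\tau}^{\,m_i}|$ away from $0$ and showing it is itself less than $1/2$ so that it equals the distance to the nearest integer. Since $\widehat{\tau} = -1/\tau$, the terms $\widehat{\tau}^{\,m_i}$ alternate in sign according to the parity of $m_i$ and decay geometrically. The dominant term is $\widehat{\tau}^{\,m_2}$ in absolute value (note $m_1 = n-1$ gives a term of size $\tau^{-(n-1)}$, which is the one already accounted for when $k$ is a Fibonacci number, but here the relevant competition is among all the tail terms). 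Actually the cleanest route: group the leading term $\widehat{\tau}^{\,m_1} = \widehat{\tau}^{\,n-1}$ separately and estimate the remainder $\sum_{i \geq 2} \widehat{\tau}^{\,m_i}$; by the no-two-consecutive-indices property and the geometric series $\sum_{\ell \geq 0} \tau^{-(m_2 - 2\ell)}$-type bound, one gets $\tau^{-m_2}(1 - \tau^{-1} - \tau^{-3} - \cdots) \leq |\sum_{i\geq 2}\widehat{\tau}^{\,m_i}|$ below and a matching geometric upper bound, and one checks the whole sum $\sum_i \widehat{\tau}^{\,m_i}$ lies strictly between $-1/2$ and $1/2$.

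The key inequality to extract is then $\|\tau k\| = |\sum_i \widehat{\tau}^{\,m_i}| > \tau^{-m_2} \cdot c - \tau^{-m_1}$ for the appropriate constant $c$ coming from the alternating geometric tail, and since $m_2 \leq n-3$ while $m_1 = n-1$, this should be bounded below by $\tau^{-(n-2)}$ after using $\tau^2 = \tau + 1$ to simplify. An alternative, perhaps slicker, is an inductive argument: peel off the smallest Zeckendorf term $f_{m_j}$ and relate $\|\tau k\|$ to $\|\tau (k - f_{m_j})\|$, but the additive behaviour of $\|\cdot\|$ under addition is awkward, so I would favour the direct Binet/Zeckendorf computation.

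The main obstacle I anticipate is the bookkeeping needed to show the alternating sum $\sum_i \widehat{\tau}^{\,m_i}$ does not accidentally come close to a half-integer and, more importantly, getting the constant in the lower bound sharp enough to land exactly at $\tau^{-(n-2)}$ rather than something weaker like $\tau^{-(n-1)}$ — this forces careful use of the gap condition $m_2 \leq n-3$ and of the exact value of the geometric sum $\sum_{\ell \geq 0} \tau^{-2\ell} = \tau^2/(\tau^2-1) = \tau^2$, combined with the identity $1/\tau^{n-3} - 1/\tau^{n-1} - 1/\tau^{n-5} - \cdots$ telescoping correctly. Once the constant is pinned down, the final step is a routine manipulation with $\tau^2 = \tau+1$.
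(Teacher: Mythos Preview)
Your Zeckendorf/Binet approach is a genuinely different route from the paper's, and it can be made to work, but your sketch contains a concrete error. You claim you will show $\bigl|\sum_i \widehat{\tau}^{\,m_i}\bigr| < \tfrac12$ so that this absolute value \emph{is} the distance to the nearest integer. That fails already for $k=4$ (here $n=5$, Zeckendorf $4=f_4+f_2$), where
\[
\widehat{\tau}^{\,4}+\widehat{\tau}^{\,2}=\frac{1}{\tau^4}+\frac{1}{\tau^2}\approx 0.528>\tfrac12 .
\]
So $\|\tau\cdot 4\|=1-0.528\approx 0.472$, not $0.528$. The fix is easy once noticed: since the Zeckendorf indices are $\geq 2$ and spaced by at least $2$, one always has $\bigl|\sum_i \widehat{\tau}^{\,m_i}\bigr|<\sum_{\ell\geq 0}\tau^{-2-2\ell}=1/\tau$, so whenever the sum exceeds $\tfrac12$ in absolute value one gets $\|\tau k\|>1-1/\tau=1/\tau^{2}>1/\tau^{n-2}$ for $n\geq 5$ immediately. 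Only the case $\bigl|\sum\bigr|<\tfrac12$ then needs the careful alternating-tail estimate, and there your identification of the ``dominant term'' is muddled (the largest term corresponds to the \emph{smallest} index $m_j$, not $m_2$), so the inequalities you wrote down would need to be reworked.

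The paper bypasses all of this with a short induction: for $f_p<k<f_{p+1}$ peel off the \emph{largest} Fibonacci summand and use the reverse triangle inequality
\[
\|\tau k\|\;\geq\;\|\tau(k-f_p)\|-\|\tau f_p\|\;>\;\frac{1}{\tau^{p-3}}-\frac{1}{\tau^{p}}\;>\;\frac{1}{\tau^{p-2}},
\]
with a separate one-line check when $k-f_p$ is itself a Fibonacci number. This is precisely the ``inductive peel'' you considered and rejected as awkward; in fact the additive behaviour of $\|\cdot\|$ is perfectly tame here because $\|\tau f_p\|=\tau^{-p}$ is tiny compared with the inductive lower bound on $\|\tau(k-f_p)\|$. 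The induction is two displayed lines and avoids the sign/magnitude bookkeeping that trips up your direct computation.
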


\begin{proof}
We give a proof by induction on $n$. For the basis case $n=5$ the statement of the proposition follows by an easy calculation.
Now assume for induction that (\ref{eq: tau k}) holds for $5\leq n \leq p$. For the induction step, $n=p+1$, let $f_p < k < f_{p+1}$. Then, if $k-f_{p-1}$ is not a Fibonacci number we have
\[
\| \tau k \| 
=    \big\| \tau (k-f_{p}) + \tau f_{p} \big\| 
\geq \big\| \tau \underbrace{(k-f_{p})}_{< f_{p-1}}\big\| - \big\|\tau f_{p} \big\| 
>    \frac{1}{\tau^{p-3}} - \frac{1}{\tau^{p}} 
>    \frac{1}{\tau^{p-2}}.
\]
If $k-f_{p-1} = f_m$  for some $m<p-1$ then 
\[
\| \tau k \| 
\geq \| \tau f_m \| - \|\tau f_{p} \| 
=    \frac{1}{\tau^{m}} - \frac{1}{\tau^{p}} 
\geq \frac{1}{\tau^{p-2}} - \frac{1}{\tau^{p}} 
=    \frac{1}{\tau^{p-1}}. \qedhere
\]
\end{proof}

\begin{proposition}
\label{prop: nbr of b}
Let $x\in A_n[1,k]$ for $1\leq k\leq f_{2n}$ (or $x\in B_n[1,k]$ for $1\leq k\leq f_{2n-1}$) and $n\geq 2$. Then
\begin{equation}
\label{eq: nbr of b}
|x|_{\mathtt{b}} \in \left\{\left\lfloor \textstyle{\frac{1}{\tau^2}k} \right\rfloor, \left\lceil \textstyle{\frac{1}{\tau^2}k} \right\rceil\right\} 
\end{equation}
\end{proposition}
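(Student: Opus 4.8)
The plan is to recast the claim as the single inequality $\big|\,|x|_{\mathtt b}-\tfrac{1}{\tau^2}|x|\,\big|<1$ and prove that by induction on $n$, handling $A_n$ and $B_n$ simultaneously by passing through the inflation $\phi$.

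Since $\tau^2$ is irrational, $\tfrac{1}{\tau^2}k\notin\mathbb Z$ for every integer $k\ge1$, and for such $k$ the integers $\lfloor\tfrac{1}{\tau^2}k\rfloor$ and $\lceil\tfrac{1}{\tau^2}k\rceil$ are exactly the integers $N$ with $|N-\tfrac{1}{\tau^2}k|<1$; so \eqref{eq: nbr of b} is equivalent to $\big|\,|x|_{\mathtt b}-\tfrac{1}{\tau^2}|x|\,\big|<1$. I would prove this for every prefix $x$ of a word of $A_n$ or of $B_n$ by induction on $n$, the case $n=1$ being immediate from $A_1=\{\mathtt a\}$, $B_1=\{\mathtt b\}$. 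For the step I would use that every word of $A_n$ is $\phi(v)$ for some $v\in A_{n-1}$ and every word of $B_n$ is $\phi(v)$ for some $v\in B_{n-1}$ (all random choices allowed), which is immediate from Definition \ref{def: recursive def An and Bn}; this is what makes the two cases collapse into one.

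So let $w=\phi(v_1)\phi(v_2)\cdots\phi(v_\ell)$ be a word of $A_n$ (resp.\ $B_n$), with $v=v_1\cdots v_\ell$ a word of $A_{n-1}$ (resp.\ $B_{n-1}$), and let $x$ be its length-$k$ prefix. Write $x=\phi(v_1)\cdots\phi(v_j)\,u$ with $u$ a possibly empty proper prefix of $\phi(v_{j+1})$ of length $r$ (so $r=0$ when $x$ ends at a block boundary or $x=w$). Because $|\phi(\mathtt a)|=3$, $|\phi(\mathtt b)|=2$, and each $\phi$-image contains exactly one $\mathtt b$, writing $p:=|v_1\cdots v_j|_{\mathtt b}$ gives
\[
|x|=3j-p+r ,\qquad |x|_{\mathtt b}=j+s ,
\]
where $s\in\{0,1\}$ is the number of $\mathtt b$'s in $u$; listing the nonempty proper prefixes of $\mathtt{baa}$, $\mathtt{ab}$, $\mathtt{ba}$ shows $(r,s)\in\{(0,0),(1,0),(1,1),(2,1)\}$. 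Setting $\delta:=p-\tfrac{1}{\tau^2}j$ and using $1-\tfrac{3}{\tau^2}=-\tfrac{1}{\tau^4}$ (i.e.\ $\tau^2$ is a root of $\lambda^2-3\lambda+1$), the leading terms cancel:
\[
|x|_{\mathtt b}-\tfrac{1}{\tau^2}|x|=(j+s)-\tfrac{1}{\tau^2}(3j-p+r)=\frac{\delta-r}{\tau^2}+s .
\]
The induction hypothesis, applied to the prefix $v_1\cdots v_j$ of $v$ (a prefix of a word of $A_{n-1}$ resp.\ $B_{n-1}$ of length $j\le f_{2(n-1)}$ resp.\ $j\le f_{2(n-1)-1}$, and with $\delta=0$ trivially when $j=0$), gives $|\delta|<1$; then $\big|\tfrac{\delta-r}{\tau^2}+s\big|<1$ follows by a one-line check of the four admissible pairs $(r,s)$ (the tightest being $(1,1)$, for which the value lies in $\big(1-\tfrac{2}{\tau^2},1\big)\subset(-1,1)$). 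This closes the induction.

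I expect the only real difficulty to lie in the choice of induction. A direct induction on the recursion $A_n=B_{n-1}A_{n-1}A_{n-1}$ does not close: splitting a prefix into completed blocks plus a shorter prefix makes $|x|_{\mathtt b}-\tfrac{1}{\tau^2}|x|$ accrue the small but nonzero defects of the completed blocks, which could a priori build up beyond $1$. Going through $\phi$ repairs this, because the inductive error then appears only inside the combination $\tfrac{\delta-r}{\tau^2}+s$, that is, damped by the factor $1/\tau^2$, so that the bounded contribution of $(r,s)$ keeps the quantity strictly under $1$. (Proposition \ref{prop: phi k} is not needed for this argument; but since the value can approach $1$ arbitrarily closely in the case $(1,1)$, the bound in \eqref{eq: nbr of b} is sharp.)
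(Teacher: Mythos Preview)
Your proof is correct and genuinely different from the paper's. The paper argues by induction directly along the concatenation recursion $A_{p+1}=B_pA_pA_p$, splitting a prefix of length $k$ at the block boundaries $f_{2p-1}$ and $f_{2p+1}$ and showing that the identity
\[
\left\lfloor \tfrac{1}{\tau^2}(q-f_{m-1})\right\rfloor=\left\lfloor \tfrac{1}{\tau^2}q\right\rfloor-f_{m-3}
\]
holds for $f_{m-1}<q<f_m$; this is exactly where Proposition~\ref{prop: phi k} enters, to guarantee that the small defect $\widehat{\tau}^{\,m-1}$ does not push $\tfrac{1}{\tau^2}q$ across an integer. So your last paragraph slightly overstates the case: the block-by-block induction \emph{does} close, but only with that Diophantine input. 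Your route via $A_n=\phi(A_{n-1})$ sidesteps this entirely, because the inductive error $\delta$ reappears damped by $1/\tau^2$ in the quantity $(\delta-r)/\tau^2+s$; the bound $|\delta|<1$ propagates with room to spare and no arithmetic of $\|\tau k\|$ is needed. What the paper's approach buys in exchange is an explicit determination of which of $\lfloor\cdot\rfloor$ or $\lceil\cdot\rceil$ occurs at Fibonacci boundary lengths; your argument gives the containment \eqref{eq: nbr of b} uniformly but does not distinguish the two cases. One small point: the identity $A_n=\{\phi(v):v\in A_{n-1}\}$ is not literally Definition~\ref{def: recursive def An and Bn} but a one-line induction from it (it is essentially what underlies \eqref{eq: size of an rec}); you may want to say so explicitly.
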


\begin{proof}
We give a proof by induction on $n$. The basis case, $n=2$, follows by considering each of the words contained in $A_2$ and $B_2$. To be able to use Proposition \ref{prop: phi k} in the induction step we have to consider the basis step $n=3$ as well, but only for the set $B_3$ (since the words in $A_2$ are of length $\geq 3$). This is however seen to hold by a straight forward enumeration of the elements of $B_3$. 

Now, assume for induction that (\ref{eq: nbr of b}) holds for $2\leq n \leq p$, for words both from $A_n$ and $B_n$. For the induction step, $n=p+1$, let us first derive an identity that we shall later on make use of.
Let $q$ and $m$ be positive integers such that $f_{m-1}< q < f_m$. Then, by the help of Proposition \ref{prop: phi k} we have
\begin{align}
\left\lfloor \frac{1}{\tau^2}(q-f_{m-1}) \right\rfloor 
&= \left\lfloor \frac{1}{\tau^2}q-f_{m-3} - \widehat{\tau}^{\,\,m-1} \right\rfloor \nonumber \\
&= \left\lfloor \frac{1}{\tau^2}q + \frac{(-1)^{m}}{\tau^{m-1}} \right\rfloor  - f_{m-3} \nonumber\\
&= \left\lfloor \frac{1}{\tau^2}q  \right\rfloor  - f_{m-3}. \label{eq: tau q}
\end{align}
With the same argumentation we can derive a similar result for $\lceil\cdot\rceil$. For the induction step we consider first the number of $\mathtt{b}$s in prefixes of words in $A_{p+1}=B_pA_pA_p$. It is clear from the induction assumption that (\ref{eq: nbr of b}) holds for $1\leq k \leq f_{2p-1}$. For $f_{2p-1}< k < f_{2p}$ or $f_{2p} < k < f_{2p+1}$ let $x=uv \in A_{p+1}[1,k]$ where $u\in B_p$. 
By the induction assumption we may assume that $|v|_{\mathtt{b}}$ is given by rounding downwards, (the result is obtained in a similar way for the case with $\lceil\cdot\rceil$). By (\ref{eq: tau q}) it now follows that 
\[
|uv|_{\mathtt{b}}
 = |u|_{\mathtt{b}} + |v|_{\mathtt{b}} 
 = f_{2p-3} + \left\lfloor \frac{1}{\tau^2}(k-f_{2p-1}) \right\rfloor 
 = \left\lfloor \frac{1}{\tau^2}k \right\rfloor.
\]
For $k=f_{2p}$ we have 
\[
	|uv|_{\mathtt{b}}
	= f_{2p-3} + \left\lfloor \frac{1}{\tau^2}(f_{2p}-f_{2p-1}) \right\rfloor 
	= \left\lfloor \frac{1}{\tau^2}f_{2p} + \frac{1}{\tau^{2p-1}} \right\rfloor 
	= \left\lceil \frac{1}{\tau^2}f_{2p} \right\rceil.
\]
For $f_{2p+1}< k < f_{2p+2}$ let $x=uvw \in A_{p+1}[1,k]$ where $u\in B_p$ and $v\in A_p$. Then the induction assumption and (\ref{eq: tau q}) gives.
\begin{align*}
	|uvw|_{\mathtt{b}}
	&= |u|_{\mathtt{b}} + |v|_{\mathtt{b}} + |w|_{\mathtt{b}} \\
	&= f_{2p-3} +f_{2p-2} + \left\lfloor \frac{1}{\tau^2}(k-f_{2p-1} - f_{2p}) \right\rfloor \\
	&= f_{2p-1} + \left\lfloor \frac{1}{\tau^2}(k-f_{2p+1})\right\rfloor \\
	&= \left\lfloor \frac{1}{\tau^2}k \right\rfloor  .
\end{align*}
For the last case $k=f_{2p+2}$ we have 
\[	|x|_{\mathtt{b}}
=\left\lfloor \frac{1}{\tau^2}(f_{2p+2}) \right\rfloor 
= \left\lfloor f_{2p} + \frac{1}{\tau^{2p}} \right\rfloor = f_{2p}.
\]
The case when we consider words from $B_{p+1}$ is treated in the same way, but where we don't need to do the induction step for the case $n=3$. This completes the induction and the proof.
\end{proof}

\begin{proposition}
\label{prop: bound nbr of b}
Let $x\in F(A_{n+2},f_{2n})$ for $n\geq2$. Then
\begin{equation}
\label{eq: bound nbr of b}
f_{2n-2} -1 \leq |x|_{\mathtt{b}} \leq f_{2n-2}+1.
\end{equation}
\end{proposition}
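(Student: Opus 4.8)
The plan is to realise $x$ as $x = w[a,b]$ with $w\in A_{n+2}$ and $b-a+1 = f_{2n}$, so that $1\le a$ and $b\le f_{2n+4}=f_{2(n+2)}$, and to count letters $\mathtt{b}$ through the two prefixes of $w$ that cut out $x$, namely $|x|_{\mathtt{b}} = |w[1,b]|_{\mathtt{b}} - |w[1,a-1]|_{\mathtt{b}}$. Since $w[1,b]$ and $w[1,a-1]$ are prefixes of the word $w\in A_{n+2}$, Proposition \ref{prop: nbr of b} gives $|w[1,b]|_{\mathtt{b}}\in\{\lfloor b/\tau^2\rfloor,\lceil b/\tau^2\rceil\}$ and $|w[1,a-1]|_{\mathtt{b}}\in\{\lfloor (a-1)/\tau^2\rfloor,\lceil (a-1)/\tau^2\rceil\}$. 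Using the trivial inequalities $\lfloor y\rfloor>y-1$ and $\lceil y\rceil<y+1$ together with the identity $b/\tau^2=(a-1)/\tau^2+f_{2n}/\tau^2=(a-1)/\tau^2+f_{2n-2}+\tau^{-2n}$ (this is (\ref{eq: fibonacci phi rec}) rewritten, since $\widehat{\tau}^{\,2n-2}=\tau^{-(2n-2)}$ and $\tau^{-(2n-2)}/\tau^2=\tau^{-2n}$) one gets at once $f_{2n-2}+\tau^{-2n}-2 < |x|_{\mathtt{b}} < f_{2n-2}+\tau^{-2n}+2$, hence, as $|x|_{\mathtt{b}}$ is an integer and $0<\tau^{-2n}<1$, that $f_{2n-2}-1\le |x|_{\mathtt{b}}\le f_{2n-2}+2$. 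This already settles the lower bound, and the whole point is to exclude the value $f_{2n-2}+2$.

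Assume $|x|_{\mathtt{b}}=f_{2n-2}+2$. Then both of the above estimates are forced to be tight: $|w[1,b]|_{\mathtt{b}}=\lceil b/\tau^2\rceil$, $|w[1,a-1]|_{\mathtt{b}}=\lfloor (a-1)/\tau^2\rfloor$, and writing out $\lceil b/\tau^2\rceil-\lfloor (a-1)/\tau^2\rfloor=f_{2n-2}+2$ in terms of fractional parts yields $\{(a-1)/\tau^2\}-\{b/\tau^2\}=1-\tau^{-2n}$, which (both fractional parts lying in $[0,1)$) forces $0<\{b/\tau^2\}<\tau^{-2n}$. So the proof reduces to the Diophantine claim: \emph{the only integers $b$ with $1\le b\le f_{2n+4}$ and $0<\{b/\tau^2\}<\tau^{-2n}$ are $b\in\{f_{2n+2},\,f_{2n+4},\,f_{2n+3}+f_{2n}\}$.} I would prove this by noting $1/\tau^2=2-\tau$, so that $\{b/\tau^2\}=\lceil\tau b\rceil-\tau b$; a value below $\tau^{-2n}<\tfrac12$ means $\tau b$ lies just below an integer, whence $\|\tau b\|<\tau^{-2n}$, and Proposition \ref{prop: phi k} then forces $b$ to be a Fibonacci number or to lie in $(f_{2n+2},f_{2n+4}]$. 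A short inspection of the Zeckendorf expansion of $b$ in these ranges (via $f_m/\tau^2=f_{m-2}+\widehat{\tau}^{\,m}$ with $\widehat{\tau}^{\,m}=(-1)^m\tau^{-m}$, so that each Fibonacci summand $f_i$ contributes a perturbation of size $\tau^{-i}$) shows that such a tiny fractional part survives only for the three listed values. This Diophantine claim is the main obstacle; everything else is bookkeeping.

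Granting the claim, $a-1=b-f_{2n}$ runs over $\{f_{2n+1},\,f_{2n+3},\,f_{2n+3}+f_{2n+1}\}$. Decompose $w$ by $A_{n+2}=B_{n+1}A_{n+1}A_{n+1}$ and each factor further by $A_{n+1}=B_nA_nA_n$; then $|B_{n+1}|=f_{2n+1}$, $|B_{n+1}A_{n+1}|=f_{2n+3}$, and $|B_{n+1}A_{n+1}B_nA_n|=f_{2n+3}+f_{2n-1}+f_{2n}=f_{2n+3}+f_{2n+1}$. Thus in each of the three cases the window $[a,b]$ starts exactly at the beginning of an inner factor of $B_{n+1}A_{n+1}A_{n+1}$: at the $B_n$-factor of an $A_{n+1}$-block when $a-1\in\{f_{2n+1},f_{2n+3}\}$, and at the last $A_n$-factor of the last $A_{n+1}$-block when $a-1=f_{2n+3}+f_{2n+1}$. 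In the first situation $x$ is a word of $B_n$ followed by a prefix of a word of $A_n$ of length $f_{2n}-f_{2n-1}=f_{2n-2}$, so $|x|_{\mathtt{b}}=f_{2n-3}+s$ where, by Proposition \ref{prop: nbr of b}, $s\in\{\lfloor f_{2n-2}/\tau^2\rfloor,\lceil f_{2n-2}/\tau^2\rceil\}=\{f_{2n-4},f_{2n-4}+1\}$, giving $|x|_{\mathtt{b}}\in\{f_{2n-2},f_{2n-2}+1\}$; in the second situation $x$ is exactly a word of $A_n$, so $|x|_{\mathtt{b}}=f_{2n-2}$. (Here I use that every word of $A_n$ has $f_{2n-2}$ letters $\mathtt{b}$, as recorded in the proof of Proposition \ref{prop: size of An Bn}, and that every word of $B_n$ has $f_{2n-3}$ letters $\mathtt{b}$, which follows by the same induction.) In all cases $|x|_{\mathtt{b}}\le f_{2n-2}+1$, contradicting $|x|_{\mathtt{b}}=f_{2n-2}+2$; this establishes the upper bound and completes the proof.
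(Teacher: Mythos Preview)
Your argument is correct, and the Diophantine claim you isolate is true (it can indeed be verified by the Zeckendorf-type analysis you outline, though the ``short inspection'' hides several case distinctions that each need Proposition~\ref{prop: phi k} applied recursively). However, your route is genuinely different from, and considerably longer than, the paper's.

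The paper does not apply Proposition~\ref{prop: nbr of b} to prefixes of the ambient word in $A_{n+2}$. Instead it uses the block decomposition into the seven sets of~(\ref{eq: seven sets}) and applies Proposition~\ref{prop: nbr of b} separately inside each $A_n$- or $B_n$-factor. The key gain is that a \emph{complete} factor contributes \emph{exactly} $f_{2n-2}$ (resp.\ $f_{2n-3}$) letters $\mathtt{b}$, with no floor/ceiling ambiguity. For a window of length $f_{2n}$ crossing one block boundary, say in $A_nA_n$, one therefore gets
\[
|x|_{\mathtt{b}}\;\le\; f_{2n-2} \;-\;\Big\lfloor\tfrac{k-1}{\tau^2}\Big\rfloor\;+\;\Big\lceil\tfrac{k-1}{\tau^2}\Big\rceil
\;=\; f_{2n-2}+1
\]
in one line, because the two rounded terms refer to the \emph{same} real number and hence differ by exactly $1$. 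The lower bound falls out the same way. No value $f_{2n-2}+2$ ever appears, so there is nothing to exclude and no Diophantine work is needed.

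Your global approach trades this exactness for a single application of Proposition~\ref{prop: nbr of b}; the price is that the two rounded terms now refer to \emph{different} real numbers (offset by $f_{2n-2}+\tau^{-2n}$), which costs one extra unit in the naive estimate. You then recover that unit through the three-position analysis. Both arguments are valid; the paper's is shorter because it exploits the exact letter count in full blocks rather than only the floor/ceiling bound on prefixes.
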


\begin{proof}
Let us first turn our attention to the upper bound in (\ref{eq: bound nbr of b}). In the same way as in the proof of Proposition \ref{prop: FAn1 = FAn2}, we consider sub-words of the seven sets, given in (\ref{eq: seven sets}). 

If $x$ is a sub-word, beginning at position $2 < k \leq f_{2n}$, in an element in $A_nA_n$ or $A_nB_n$ then
\begin{align*}
|x|_{\mathtt{b}} 
&\leq f_{2n-2} + \left\lceil \frac{1}{\tau^2}\big((k-f_{2n})+f_{2n}\big)\right\rceil - \left\lfloor \frac{1}{\tau^2}k \right\rfloor 
\leq f_{2n-2} +1.
\end{align*}
since the number of $\mathtt{b}$s in a word in $A_n$ is $f_{2n-2}$, and a word in $A_n$ is of length $f_{2n}$. 
The proof of the to the upper bound in (\ref{eq: bound nbr of b}), for the other sets in (\ref{eq: seven sets}) is obtained in the same way. 

For the lower bound we have
\begin{align*}
|x|_{\mathtt{b}} 
&\geq \left\lfloor \frac{1}{\tau^2}(k+f_{2n})\right\rfloor - \left\lceil \frac{1}{\tau^2}k \right\rceil \\
&=    f_{2n-2} +  \left\lfloor \frac{1}{\tau^2}k +\frac{1}{\tau^{2n-2}}\right\rfloor - \left\lceil \frac{1}{\tau^2}k \right\rceil \\
&\geq f_{2n-2} -1,
\end{align*}
for any $x\in F(A_{n+2},f_{2n})$.
\end{proof}

\section{Estimating the size of the sub-word set}

We shall in this section give an estimate of the sub-word  set $F(A,f_{2n})$ and give the final part of the proof of Theorem \ref{thm: main}. Let us introduce the set
\[	C_{n} = \phi\big( F(A, f_{2n-2}+1) \big).
\]
By Proposition \ref{prop: bound nbr of b} we can estimate the number of $\mathtt{b}$s in words in $F(A, f_{2n-2}+1)$. This estimate then gives that we have bounds on the length of words in $C_n$. That is, for $x\in C_n$ we have
\begin{equation}
	|x| = |x|_{\mathtt{a}} + |x|_{\mathtt{b}} \geq 3(f_{2n-3}-1) + 2(f_{2n-4}+2) = f_{2n}+1 
	\label{eq: low estim x in C}\\
\end{equation}
and
\begin{equation}
	|x| = |x|_{\mathtt{a}} + |x|_{\mathtt{b}} \leq 3(f_{2n-3}+2) + 2(f_{2n-4}-1) = f_{2n}+4.
	\label{eq: up estim x in C} 
\end{equation}

\begin{proposition}
For $n\geq 2$ we have 
\[      F(A, f_{2n}) = F\big(C_{n}, f_{2n}\big).
\]
\end{proposition}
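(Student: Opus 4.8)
The plan is to prove the two inclusions $F(A,f_{2n})\subseteq F(C_n,f_{2n})$ and $F(C_n,f_{2n})\subseteq F(A,f_{2n})$ separately, the first being the substantive one. For the easier inclusion, observe that every element of $C_n = \phi(F(A,f_{2n-2}+1))$ is, by definition, of the form $\phi(w)$ for some sub-word $w$ of some $r_m = \phi^{m-1}(\mathtt a)$; hence $\phi(w)$ is a sub-word of $\phi(r_m) = r_{m+1}$ up to a bounded shift at the two ends coming from the letter immediately preceding and following $w$ in $r_m$. More precisely, one uses that if $w \in F(A,\ell)$ then there is a word $uwv\in A_N$ with $|u|,|v|\le 1$, so $\phi(u)\phi(w)\phi(v)\in A_{N+1}$ and therefore $\phi(w)\in F(A_{N+1}, |\phi(w)|)$. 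Since $|\phi(w)|\ge f_{2n}+1$ by \eqref{eq: low estim x in C}, any sub-word of $\phi(w)$ of length $f_{2n}$ lies in $F(A,f_{2n})$; taking sub-words of elements of $C_n$ then gives $F(C_n,f_{2n})\subseteq F(A,f_{2n})$.

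For the main inclusion, let $x\in F(A,f_{2n})$. I would first locate $x$: there is $N$ and a word $z\in A_N$ with $x = z[i,i+f_{2n}-1]$. Write $z$ as a concatenation of the $\phi$-images of the letters of the parent word $z'\in A_{N-1}$ (this is exactly how $A_N$ is built from $A_{N-1}$ via Definition~\ref{def: recursive def An and Bn}, which mirrors the action of $\phi$; Proposition~\ref{prop: phi injectiv} guarantees these images do not overlap ambiguously). Then $x$ is covered by the $\phi$-images of a block of consecutive letters of $z'$, say the block $w' = z'[j,j']$; the key point is to control how long $w'$ needs to be. Since $\phi$ expands lengths by a factor tending to $\tau^2$, and more quantitatively since $|\phi(\mathtt a)| = 3$, $|\phi(\mathtt b)| = 2$, a block $w'$ of letters whose $\phi$-image has total length $\ge f_{2n} + 4$ suffices to contain any length-$f_{2n}$ window together with one spare letter on each side; using the letter-distribution bound of Proposition~\ref{prop: nbr of b} (equivalently Proposition~\ref{prop: bound nbr of b}), the number of $\mathtt b$'s in such a $w'$ is controlled, and one checks that taking $|w'| = f_{2n-2}+1$ is both sufficient and consistent, so $w'\in F(A,f_{2n-2}+1)$. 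Then $\phi(w')\in C_n$ and $x$ is a sub-word of a word in $C_n$ (the randomness in $\phi$ acting on $z'$ is already realised inside $z$, so the relevant choice of $\phi$ is available), giving $x\in F(C_n,f_{2n})$.

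The main obstacle is the bookkeeping in the second inclusion: one must verify that the fixed window length $f_{2n-2}+1$ for the pre-image block $w'$ is exactly right — large enough that $\phi(w')$ always contains the full length-$f_{2n}$ window $x$ (which needs the lower bound \eqref{eq: low estim x in C} applied to the worst case, i.e. a $w'$ with as few $\mathtt b$'s, hence as few length-$3$ images, as possible), yet not so large that $w'$ fails to be a legitimate element of $F(A, f_{2n-2}+1)$. This is precisely where Propositions~\ref{prop: nbr of b} and \ref{prop: bound nbr of b} enter, pinning down $|w'|_{\mathtt b}$ to within $\pm 1$ so that the length of $\phi(w')$ is pinned to the interval $[f_{2n}+1,\,f_{2n}+4]$ as in \eqref{eq: low estim x in C}–\eqref{eq: up estim x in C}. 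A secondary subtlety is the boundary effect: the window $x$ may start or end in the middle of some $\phi(\text{letter})$, so one allows $w'$ to reach one letter beyond on each side and relies on Corollary~\ref{cor: An prefix equal Bn prefix} and Proposition~\ref{prop: An suf sub of Bn} to identify the relevant prefix/suffix behaviour, exactly as in the proof of Proposition~\ref{prop: FAn1 = FAn2}.
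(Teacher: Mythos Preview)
Your approach is essentially the paper's: show $F(C_n,f_{2n})\subseteq F(A,f_{2n})$ because inflating a sub-word of $A$ again gives a sub-word of $A$, and show the converse by pulling a length-$f_{2n}$ window $x$ back to a length-$(f_{2n-2}+1)$ block $w'$ in the parent word whose $\phi$-image has length at least $f_{2n}+1$ by \eqref{eq: low estim x in C}, hence contains $x$.

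Two small points. First, in the easy direction your claim ``there is $uwv\in A_N$ with $|u|,|v|\le 1$'' is false (the context words $u,v$ can be arbitrarily long) and also unnecessary: from $uwv\in A_N$ you get $\phi(u)\phi(w)\phi(v)\in A_{N+1}$ regardless of $|u|,|v|$, which is all you use. Second, the paper does not need anything like Corollary~\ref{cor: An prefix equal Bn prefix} or Proposition~\ref{prop: An suf sub of Bn} for boundary effects, nor the stronger target $|\phi(w')|\ge f_{2n}+4$: it simply takes $w'$ to begin at the letter of the parent word whose $\phi$-image contains the first letter of $x$, and uses only $|\phi(w')|\ge f_{2n}+1$. (If you want to be careful about the worst case where that first parent letter is $\mathtt{a}$ and the offset is $2$, note that then $w'$ begins with $\mathtt{a}$, so $|w'|_{\mathtt b}\le f_{2n-4}+1$ and in fact $|\phi(w')|\ge f_{2n}+2$, which is enough.) Strip those extras and your argument is exactly the paper's.
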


\begin{proof}
The set $F\big(C_{n}, f_{2n}\big)$ is created by inflating words from $F(A,f_{2n-2}+1)$ which are then cut into suitable lengths. This implies that $F(A, f_{2n}) \supseteq F\big(C_{n}, f_{2n}\big)$. 

For the converse inclusion, let $x\in F(A, f_{2n})$. Then there is a word $w\in A_{n+1}$ and words $u,v$ such that $uxv\in A_{n+2}$ and $uxv\in\phi(w)$. For any word $z\in F\big(\{w\}, f_{2n-2}+1\big)$ we have from (\ref{eq: low estim x in C}) that any $s \in \phi(z)$ fulfils $f_{2n} + 1 \leq  |s|$. This gives that there is a word $z_x \in F\big(\{w\}, f_{2n-2}+1\big)$ such that $x$ is a sub-word of a word in $\phi(z_x)$, which implies $x\in F\big(C_{n}, f_{2n}\big)$.
\end{proof}

\begin{proposition}
\label{prop: size of Fn}
For $n\geq2$ we have
\begin{equation}
\label{eq: size of Fn}
      |F(A,f_{2n})| \leq  2^{f_{2n-3}+2n} \cdot 5^{n-1}.
\end{equation}
\end{proposition}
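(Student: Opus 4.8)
The plan is to bound $|F(A, f_{2n})| = |F(C_n, f_{2n})|$ by counting, first, how many words can appear in $C_n = \phi(F(A, f_{2n-2}+1))$, and second, how many length-$f_{2n}$ sub-words each such word contributes. For the first part, I would invoke Proposition~\ref{prop: size of Fn}'s predecessors: a word $w \in F(A, f_{2n-2}+1)$ has $|w|_{\mathtt{b}}$ controlled by Proposition~\ref{prop: bound nbr of b} (applied with the appropriate shift, since $f_{2n-2}+1$ is close to $f_{2(n-1)}$), so $|w|_{\mathtt b}$ lies in a window of size~$3$ around $f_{2n-4}$. Since $\phi$ is deterministic on $\mathtt a$ and binary on each $\mathtt b$, the number of distinct words obtainable as $\phi(w)$ for a fixed $w$ is exactly $2^{|w|_{\mathtt b}} \le 2^{f_{2n-4}+1}$. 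Hence
\[
|C_n| \;\le\; |F(A, f_{2n-2}+1)| \cdot 2^{f_{2n-4}+1}.
\]
The term $|F(A, f_{2n-2}+1)|$ I would bound crudely by the number of binary words of that length whose $\mathtt b$-count lies in the Proposition~\ref{prop: bound nbr of b} window — i.e.\ at most $3$ choices of the count times $\binom{f_{2n-2}+1}{\text{count}}$ — but more simply, I expect the intended route is an inductive bound: substitute $n \mapsto n-1$ in \eqref{eq: size of Fn} to get $|F(A, f_{2n-2})| \le 2^{f_{2n-5}+2(n-1)}5^{n-2}$, and absorb the passage from length $f_{2n-2}$ to $f_{2n-2}+1$ into a constant factor.

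The second part is the per-word cut count. A word $x \in C_n$ has length between $f_{2n}+1$ and $f_{2n}+4$ by \eqref{eq: low estim x in C}–\eqref{eq: up estim x in C}, so the number of distinct length-$f_{2n}$ sub-words of $x$ is at most $|x| - f_{2n} + 1 \le 5$. Therefore
\[
|F(A, f_{2n})| \;=\; |F(C_n, f_{2n})| \;\le\; 5\,|C_n|.
\]
Chaining this with the bound on $|C_n|$ and the inductive hypothesis on $|F(A, f_{2n-2})|$ (or $|F(A,f_{2n-2}+1)|$), and using the Fibonacci identity $f_{2n-5} + f_{2n-4} = f_{2n-3}$ to combine the exponents of~$2$, the exponent of~$2$ becomes $f_{2n-3} + 2(n-1) + 1 \le f_{2n-3}+2n$ and the power of~$5$ picks up one extra factor from the cut count, going from $5^{n-2}$ to $5^{n-1}$. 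The base case $n=2$ is a finite check on $F(A, f_4) = F(A,3)$.

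The main obstacle I anticipate is bookkeeping at the boundary: $F(A, f_{2n-2}+1)$ has length $f_{2n-2}+1$, which is \emph{not} a Fibonacci number, so Proposition~\ref{prop: bound nbr of b} does not apply verbatim and one must either re-prove the $\mathtt b$-count window for this length (easy, since adding one letter changes the count by at most~$1$) or set up the induction so that it threads through lengths of the form $f_{2k}+1$ throughout. A secondary subtlety is making sure the factor-of-$2$ counting $2^{|w|_{\mathtt b}}$ is not an over-count — but Proposition~\ref{prop: phi injectiv} guarantees that distinct $(w, \text{choice})$ pairs give genuinely distinct outputs only when $w$ itself is fixed, which is all we need here since we are already summing over $w$. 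Neither of these is deep; the proof is essentially an exercise in propagating the three established facts (the $\mathtt b$-count window, the determinism-modulo-$2^{\#\mathtt b}$ of $\phi$, and the $O(1)$ length slack of inflated words) through one layer of induction.
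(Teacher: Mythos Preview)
Your approach is correct and matches the paper's proof: induct on $n$, use the length bounds \eqref{eq: low estim x in C}--\eqref{eq: up estim x in C} on $C_n$ to extract the factor of $5$, use Proposition~\ref{prop: bound nbr of b} (plus the one extra letter) to bound the $\mathtt b$-count and hence the $\phi$-branching, and close the recursion with the inductive hypothesis and the identity $f_{2n-5}+f_{2n-4}=f_{2n-3}$. Two minor remarks: the correct upper bound on $|w|_{\mathtt b}$ for $w\in F(A,f_{2n-2}+1)$ is $f_{2n-4}+2$ rather than $f_{2n-4}+1$ (the extra letter may itself be a $\mathtt b$), and Proposition~\ref{prop: phi injectiv} is not needed for an upper bound---the estimate $|\{\phi(w)\}|\le 2^{|w|_{\mathtt b}}$ is immediate.
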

\begin{proof}
We give a proof by induction on $n$. For the basis case $n=2$ we have
\[	|F(A,f_4)| = 7 \leq 160 = 2^{f_1 + 4} \cdot 5.
\]
Assume for induction that (\ref{eq: size of Fn}) holds for $2\leq n \leq p$. For the induction step $n=p+1$, note that from (\ref{eq: low estim x in C}) and (\ref{eq: up estim x in C}) it follows that $|F(\{x\},f_{2p+2})| \leq 5$ for $x\in C_{p+1}$. By Proposition \ref{prop: bound nbr of b} we have that the number of $\mathtt{b}$s in $u\in F(A, f_{2p}+1)$ is at most $f_{2p-2}+2$.
This gives then, with the help of the induction assumption
\begin{align*}
\big|F\big(A,f_{2p+2}\big)\big| 
& \leq |C_{p+1}| \cdot 5 \\
& \leq \big|F\big(A,f_{2p}\big)\big|  \cdot 2^{f_{2p-2}+2} \cdot 5 \\
& \leq 2^{f_{2p-3}+2p} \cdot 5^{p-1} \cdot 2^{f_{2p-2}+2} \cdot 5 \\
&= 2^{f_{2(p+1)-3}+2(p+1)}\cdot 5^{p},
\end{align*}
which completes the proof. 
\end{proof}

We can now turn to proving the last equality in (\ref{eq: main}), and thereby completing the proof of Theorem \ref{thm: main}. By Proposition \ref{prop: size of Fn} we have
\begin{align*}
\lim_{n\to\infty} \frac{\log |F(A,f_{2n})|}{f_{2n}} 
&\leq \lim_{n\to\infty} \frac{\log \big(2^{f_{2n-3}+2n}\cdot 5^{n-1}\big) }{f_{2n}} \\
&= \lim_{n\to\infty} \frac{f_{2n-3}+2n}{f_{2n}} \log 2 + \frac{n-1}{f_{2n}}\log 5\\
&= \frac{1}{\tau^3} \log 2,
\end{align*}
which implies the equality in (\ref{eq: main}).

A further generalisation of the random Fibonacci substitutions, would be to study the structure occurring when mixing two substitutions with different inflation multipliers. This, however, seems to be a far more complex question.

\section{Acknowledgement}

The author wishes to thank M. Bakke and M. Moll at Bielefeld University, Germany, for our discussions and for reading drafts of the manuscript. 
This work was supported by the German Research Council (DFG), via CRC 701.

\end{document}